\pdfoutput=1
\documentclass[11pt, reqno]{amsart}

\numberwithin{equation}{section}

\usepackage{amsmath,amsfonts,amssymb}
\usepackage{mathtools}
\usepackage[utf8]{inputenc}
\usepackage{comment}
\usepackage[english]{babel}
\usepackage{hyperref}
\usepackage{a4wide}
\usepackage{cite}
\usepackage{stmaryrd}
\usepackage[new]{old-arrows}
\usepackage[margin=3.33cm]{geometry}

\newcommand{\Sp}{\mathrm{Sp}}
\newcommand{\SO}{\mathrm{SO}}

\newcommand{\Higgs}{\mathrm{Higgs}}

\newcommand{\End}{\mathrm{End}}

\newcommand{\SPEnd}{\mathrm{SPEnd}}

\DeclareUnicodeCharacter{FB01}{fi}
\newtheorem{theorem}{Theorem}[section]

\newtheorem{lemma}[theorem]{Lemma}
\newtheorem{remark}[theorem]{Remark}
\newtheorem{example}[theorem]{Example}
\newtheorem{prop}[theorem]{Proposition}
\newtheorem*{aim-non}{Aim}

\newtheorem*{conjecture-non}{Conjecture}

\makeatletter
\def\subsection{\@startsection{subsection}{2}%
  \normalparindent{.5\linespacing\@plus.7\linespacing}{-.5em}%
  {\normalfont\bfseries}}
\def\subsubsection{\@startsection{subsubsection}{3}%
  \normalparindent\z@{-.5em}%
  {\normalfont\itshape}}
\makeatother

\theoremstyle{definition}
\newtheorem{definition}[theorem]{Definition}

\begin{document}

\title[Very stable and semiprojectivity]{Semiprojectivity and Very Stability in Moduli of Symplectic and Orthogonal Parabolic Higgs Bundles}
\author[S. Roy]{Sumit Roy}

 \address{Stat-Math Unit, Indian Statistical Institute, 203 B.T. Road, Kolkata 700 108, India.}
\email{sumitroy\_r@isical.ac.in}
\subjclass[2020]{14D20, 14H60, 14H70}
\keywords{Moduli space, Higgs bundle, Hitchin map, semiprojective, very stable}

\begin{abstract}
Let $X$ be a compact Riemann surface of genus $g \geq 2$, and let $D \subset X$ be a fixed finite subset. We prove the semiprojectivity of the moduli space of semistable symplectic or orthogonal parabolic Higgs bundles over $X$. We show that a stable symplectic parabolic bundle $E$ on $X$ is strongly very stable, meaning $E$ does not have any nonzero strongly parabolic nilpotent Higgs field, if and only if the symplectic parabolic Hitchin morphism induced on the affine space $$H^0(X,\mathrm{SPEnd}_\mathrm{Sp}(E) \otimes K(D))$$ is a proper morphism, where $\mathrm{SPEnd}_\mathrm{Sp}(E)$ denotes the set of symplectic strongly parabolic endomorphisms of $E$. We remark that the same criterion for very stability applies to the orthogonal case.
\end{abstract}
\maketitle

\section{Introduction}

Let $X$ be a compact Riemann surface of genus $g \geq 2$, and let $D \subset X$ be a finite subset. Parabolic bundles over $(X,D)$ were introduced by Mehta and Seshadri in \cite{MS80} in order to extend the Narasimhan--Seshadri correspondence to irreducible unitary representations of $\pi_1(X\setminus D)$. A parabolic bundle consists of a holomorphic vector bundle together with a weighted flag at each point of $D$. Bhosle and Ramanathan extended this notion to parabolic principal $G$-bundles, for $G$ a connected reductive group, and constructed the corresponding moduli spaces in \cite{BR89}. 

A symplectic (resp. orthogonal) parabolic bundle is a parabolic bundle equipped with a suitably defined nondegenerate alternating (resp. symmetric) bilinear form with values in a holomorphic line bundle; see \cite{BMW11}. When the parabolic weights are rational, these objects can be interpreted in terms of parabolic principal $G$-bundles for $G=\Sp(2m,\mathbb{C})$ (resp. $G=\SO(n,\mathbb{C})$), again in the sense of \cite{BMW11}. Thus the symplectic and orthogonal parabolic cases naturally fit into the general framework of parabolic principal bundles.

A Higgs bundle on $X$ is a pair $(E,\tau)$, where $E$ is a holomorphic vector bundle and $\tau$, called the Higgs field, is a holomorphic $1$-form with values in $\End(E)$ satisfying $\tau \wedge \tau =0$. On a compact Riemann surface, this is equivalent to a holomorphic morphism
\[
\tau:E\longrightarrow E\otimes K,
\]
where $K$ is the canonical bundle of $X$. Higgs bundles were introduced by Hitchin in \cite{H87a} in his study of the self-duality equations on a Riemann surface. A parabolic Higgs bundle on $(X,D)$ is a pair $(E_*,\Phi)$, where $E_*$ is a parabolic bundle and
\[
\Phi:E\longrightarrow E\otimes K(D)
\]
is a parabolic Higgs field. Yokogawa constructed the moduli space
\[
\mathcal{M}_{\Higgs}(n,d,\alpha)
\]
of semistable parabolic Higgs bundles of fixed rank $n$, degree $d$, and parabolic weights $\alpha$ in \cite{Y93}. The coefficients of the characteristic polynomial $\det(x\cdot I-\Phi)$ define the Hitchin morphism
\[
h:\mathcal{M}_{\Higgs}(n,d,\alpha)\longrightarrow \mathcal{A}:=\bigoplus_{i=1}^{n}H^0(X,K(D)^i),
\]
which is proper; see \cite{N91,M94}. The geometry of this morphism plays a central role in the study of Higgs bundles.

The aim of this paper is to study symplectic and orthogonal analogues of parabolic Higgs bundles from two related perspectives. The first concerns the global geometry of the corresponding moduli spaces, while the second concerns a Hitchin-theoretic characterization of very stability.

We begin with semiprojectivity. Recall that a quasi-projective variety $Z$ equipped with a $\mathbb{C}^*$-action is called \textit{semiprojective} if the fixed point locus $Z^{\mathbb{C}^*}$ is proper and, for every $z\in Z$, the limit
\[
\lim_{t\to 0} t\cdot z
\]
exists in $Z$. In the context of Higgs bundles, the natural $\mathbb{C}^*$-action is given by scaling the Higgs field. Semiprojectivity is a useful property because it provides control over the behaviour of this action and relates the geometry of the moduli space to that of the fixed point locus and the nilpotent cone. It also allows one to apply techniques such as Bia{\l}ynicki--Birula type decompositions in the study of the topology of these spaces. Semiprojectivity for moduli spaces of parabolic Higgs bundles was established in \cite{R24}. Our first result extends this to the symplectic and orthogonal settings.

\begin{theorem}
The moduli spaces
\[
\mathcal{M}^{\Sp}_{\Higgs}(\alpha),\qquad
\mathcal{M}^{\SO(2m)}_{\Higgs}(\alpha),\qquad
\mathcal{M}^{\SO(2m+1)}_{\Higgs}(\alpha),
\]
and their strongly parabolic analogues are semiprojective.
\end{theorem}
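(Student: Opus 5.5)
The plan is to follow the strategy used for ordinary parabolic Higgs bundles in \cite{R24}: verify the two conditions of semiprojectivity separately, with the Hitchin morphism as the main tool. Throughout, $\mathbb{C}^*$ acts by $t\cdot(E_*,\varphi,\Phi)=(E_*,\varphi,t\Phi)$, where $\varphi$ is the symplectic or orthogonal form. This action preserves semistability, because the set of $\Phi$-invariant isotropic parabolic subbundles does not change when $\Phi$ is rescaled. It also preserves the conditions that $\Phi$ be parabolic (resp.\ strongly parabolic) and skew with respect to $\varphi$. The first step is to recall that each of these moduli spaces is quasi-projective; this comes from the Bhosle--Ramanathan/Yokogawa type construction for parabolic principal $G$-bundles with rational weights. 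The second step is to set up the symplectic/orthogonal parabolic Hitchin morphism
\[
h_G:\mathcal{M}^{G}_{\Higgs}(\alpha)\longrightarrow \mathcal{A}_G=\bigoplus_i H^0(X,K(D)^{d_i}),
\]
given by the invariant polynomials of $G$. These are the coefficients of $\det(x-\Phi)$ of even degree $d_i=2,4,\dots$, together with the Pfaffian for $\SO(2m)$. By construction $h_G$ is $\mathbb{C}^*$-equivariant, where $\mathbb{C}^*$ acts on $\mathcal{A}_G$ with strictly positive weights $d_i$.

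The key step is to show that $h_G$ is proper. I would deduce this from the properness of the $\mathrm{GL}$ parabolic Hitchin map \cite{N91,M94} via the forgetful morphism $\mathcal{M}^{G}_{\Higgs}(\alpha)\to\mathcal{M}_{\Higgs}(n,d,\alpha)$, which sends $(E_*,\varphi,\Phi)$ to $(E_*,\Phi)$. The semistability issue is handled by the standard fact (as in \cite{BMW11}) that a semistable symplectic/orthogonal parabolic Higgs bundle is semistable as a parabolic Higgs bundle. This holds because the Harder--Narasimhan filtration of a self-dual object is self-dual, so a destabilizing subbundle can be replaced by an isotropic one. The forgetful map is finite, or at least proper: its fibres record the choice of the form $\varphi$ up to automorphism, and a limiting family of forms stays nondegenerate. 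To prove this I would use the valuative criterion. Given a family over a punctured disc, take its limit in the $\mathrm{GL}$ moduli space, and use the properness of Langton-type completions for parabolic $G$-bundles to extend $\varphi$ after a finite base change. If that is awkward, an alternative is to run Langton's argument directly on $G$-Higgs bundles, using the Hitchin map to $\mathcal{A}$, which is proper on the $\mathrm{GL}$ side.

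With properness in hand, both conditions of semiprojectivity follow formally. For the existence of limits, take a point $z$; the orbit $\{t\cdot z: t\in\mathbb{C}^*\}$ maps under $h_G$ to $t\cdot h_G(z)$, which tends to $0$ as $t\to 0$ because all weights are positive. Hence the orbit map extends to $\mathbb{C}\to\mathcal{A}_G$. By properness of $h_G$ and the valuative criterion applied to the orbit map $\mathbb{C}^*\to\mathcal{M}$, the limit $\lim_{t\to0}t\cdot z$ exists; it lies in the nilpotent cone $h_G^{-1}(0)$, which is proper. For the second condition, the fixed locus is closed and, again by equivariance, is contained in $h_G^{-1}(0)$, so it is proper as well. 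The strongly parabolic moduli spaces are closed $\mathbb{C}^*$-invariant subvarieties, so the same argument, or simply restriction, applies to them. For odd orthogonal groups, $\SO(2m+1)$, the only change is in the list of invariant polynomials.

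I expect the main obstacle to be the properness of $h_G$, and in particular checking that the $\mathrm{GL}$-limit of a family of symplectic/orthogonal parabolic Higgs bundles carries a nondegenerate form of the right type compatible with the parabolic structure. I would first try to realize the $G$-moduli space as a closed subscheme of the fixed locus of an involution $(E_*,\Phi)\mapsto(E_*^\vee\otimes L,-\Phi^t)$ on a $\mathrm{GL}$ moduli space. Such a fixed locus is closed, and properness over $\mathcal{A}$ would then be inherited from the $\mathrm{GL}$ case. Finally, I would check that the natural map from the $G$-moduli space to this fixed locus is finite.
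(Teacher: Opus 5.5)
Your proposal is correct and follows essentially the same route as the paper: $\mathbb{C}^*$-equivariance of $h_G$ with positive weights on the base, existence of $\lim_{t\to 0} t\cdot z$ via the valuative criterion for the proper map $h_G$, and properness of the fixed locus as a closed subset of the nilpotent cone $h_G^{-1}(0)$; the strongly parabolic cases then follow by the same argument, and your remark that they are closed $\mathbb{C}^*$-invariant subvarieties makes that restriction step cleaner than the paper's ``by the same argument.'' The one real difference is that the paper simply assumes properness of $h_G$ without proof, so your reduction to the $\mathrm{GL}$ case is additional; as sketched it skips a genuine point, because a limit of nondegenerate forms need not stay nondegenerate, and you would need the rescaling and Langton-type argument (or a finiteness result for the forgetful map) that you only allude to.
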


The second part of the paper is motivated by the notion of very stability. In the non-parabolic case, Pauly and Pe\'on-Nieto proved in \cite{PP19} that a stable vector bundle is very stable if and only if the Hitchin morphism associated to it is proper. Pe\'on-Nieto extended this result to the parabolic setting in \cite{P24}, and Zelaci obtained an analogous criterion for principal $G$-bundles in \cite{Z20}. Our goal is to obtain a corresponding result for symplectic parabolic bundles.

Let $(E_*,\varphi)$ be a symplectic parabolic bundle. We denote by
\[
\mathcal{W}_{E,\mathrm{st}}
=
H^0\bigl(X,\SPEnd_{\Sp}(E_*)\otimes K(D)\bigr)
\]
the vector space of strongly parabolic symplectic Higgs fields on $(E_*,\varphi)$. If $(E_*,\varphi)$ is stable, then each element of $\mathcal{W}_{E,\mathrm{st}}$ determines a stable strongly parabolic symplectic Higgs bundle, and hence defines a morphism
\[
h_{E,\mathrm{st}}:\mathcal{W}_{E,\mathrm{st}}\longrightarrow \mathcal{A}^{\mathrm{st}}_{\Sp}
\]
by composition with the symplectic Hitchin morphism. We say that $(E_*,\varphi)$ is \textit{strongly very stable} if it admits no nonzero nilpotent strongly parabolic symplectic Higgs field. The following theorem provides a characterization of this condition.

\begin{theorem}
Let $(E_*,\varphi)$ be a stable symplectic parabolic bundle. Then the following are equivalent:
\begin{align*}
(E_*,\varphi)\ \textnormal{is strongly very stable}
&\iff h_{E,\mathrm{st}}\ \textnormal{is finite}\\
&\iff h_{E,\mathrm{st}}\ \textnormal{is proper}\\
&\iff h_{E,\mathrm{st}}\ \textnormal{is quasi-finite}.
\end{align*}
\end{theorem}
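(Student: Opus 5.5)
The plan follows the Pauly--Pe\'on-Nieto argument, adapted to the symplectic strongly parabolic setting. The basic structure is that $h_{E,\mathrm{st}}:\mathcal{W}_{E,\mathrm{st}}\to\mathcal{A}^{\mathrm{st}}_{\Sp}$ is a morphism between affine spaces. It is equivariant for the $\mathbb{C}^*$-actions: scaling on $\mathcal{W}_{E,\mathrm{st}}$, and the weighted action on $\mathcal{A}^{\mathrm{st}}_{\Sp}=\bigoplus_i H^0(X,K^{2i}(D)^{2i-1})$, or whatever twisting the strongly parabolic Hitchin base carries, with weight $2i$ on the $i$-th summand. Moreover, $h_{E,\mathrm{st}}$ is homogeneous, given by the coefficients of the characteristic polynomial, i.e. the invariant polynomials of $\mathfrak{sp}(2m)$ of degrees $2,4,\dots,2m$.

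The key observation is that $h_{E,\mathrm{st}}^{-1}(0)$ is exactly the set of nilpotent strongly parabolic symplectic Higgs fields on $E_*$. This holds because an endomorphism whose characteristic polynomial is $x^{2m}$ is nilpotent, and this can be checked pointwise on the generic fibre. Therefore strong very stability is equivalent to $h_{E,\mathrm{st}}^{-1}(0)=\{0\}$.

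The implications finite $\Rightarrow$ proper and finite $\Rightarrow$ quasi-finite are standard. For proper $\Rightarrow$ strongly very stable, suppose $\Phi\neq 0$ is nilpotent. Then the whole line $\mathbb{C}\Phi$ lies in $h_{E,\mathrm{st}}^{-1}(0)$. This fibre is proper over a point, hence a complete subvariety of an affine space, hence finite, which is a contradiction. The same argument gives quasi-finite $\Rightarrow$ strongly very stable.

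The remaining step, which I expect to be the main point, is strongly very stable $\Rightarrow$ finite. I would use the standard graded lemma: a $\mathbb{C}^*$-equivariant morphism $f:V\to W$ between affine spaces with positive weights, satisfying $f^{-1}(0)=\{0\}$, is finite. To prove the lemma, let $R=\mathbb{C}[V]$, which is positively graded, and $S=\mathbb{C}[W]$. Since $f^{-1}(0)=\{0\}$, the ring $R/\mathfrak{m}_S R$ is Artinian, hence a finite-dimensional graded vector space. Lift a homogeneous basis of it to homogeneous elements of $R$. The graded Nakayama lemma, which applies because all degrees are positive, shows that these lifts generate $R$ as an $S$-module. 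Hence $f$ is finite.

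The points to check carefully in the paper's setting are the following:
\begin{itemize}
\item the Hitchin map restricted to strongly parabolic symplectic fields lands in the stated base $\mathcal{A}^{\mathrm{st}}_{\Sp}$;
\item the map is genuinely polynomial and weighted-homogeneous with positive weights;
\item the odd coefficients vanish, so only the $\mathfrak{sp}$-invariants matter.
\end{itemize}
Stability of $E_*$ is used only so that $h_{E,\mathrm{st}}$ factors through the moduli space; at the level of affine spaces the argument is purely algebraic. The orthogonal case is identical, with the $\mathfrak{so}$-invariants, including the Pfaffian for $\SO(2m)$, in place of the $\mathfrak{sp}$-invariants.
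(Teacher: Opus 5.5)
Your proposal is correct and follows essentially the same route as the paper. The $\mathbb{C}^*$-line through a nonzero nilpotent field rules out quasi-finiteness, and the homogeneous finiteness result, Lemma~\ref{finite} (which the paper cites and which your graded Nakayama argument actually proves), gives strongly very stable $\Rightarrow$ finite.

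The only minor difference is in handling properness. The paper gets proper $\Rightarrow$ finite directly from ``proper and affine implies finite.'' You instead deduce proper $\Rightarrow$ strongly very stable, using that the complete fibre $h_{E,\mathrm{st}}^{-1}(0)$ sitting inside an affine space must be finite. Both arguments are valid.
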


Thus, in the symplectic parabolic setting, strong very stability is completely determined by the geometry of the associated Hitchin morphism. We also indicate the corresponding statement in the orthogonal case (see Remark \ref{orthogonal}).

The proofs of semiprojectivity rely on the $\mathbb{C}^*$-equivariance of the Hitchin morphism together with its properness. The very-stability criterion is obtained by combining the homogeneity of the Hitchin map with the linear structure of the space $\mathcal{W}_{E,\mathrm{st}}$.

The results of this paper suggest several directions for further study. One may investigate the topology and cohomology of these moduli spaces using their semiprojective structure, or study the geometry of the corresponding Hitchin fibres in the symplectic and orthogonal parabolic settings. It would also be natural to extend these results to more general parabolic principal $G$-Higgs bundles.

\section{Preliminaries}
\subsection{Parabolic bundles}
	Let us choose a set $D$ of $r$ distinct points $p_1, ..., p_r$ on $X$.We will keep this set $D$ fixed throughout this paper.
	\begin{definition}\label{parabolic}
		 A \textit{parabolic bundle} $E_*$ of rank $n$ over the curve $X$ is a rank $n$ holomorphic vector bundle $E$ over $X$ with an additional structure (call the \textit{parabolic structure}) along $D$, i.e. for every $p\in D$, we have
	\begin{enumerate}
		\item a descending filtration of subspaces of the fiber over $p$
		\[
		E_p \eqqcolon E_{p,1}\supsetneq E_{p,2} \supsetneq \dots \supsetneq E_{p,n_p} \supsetneq E_{p,n_p+1} =\{0\},
		\]
		\item an ascending sequence of real numbers between $0$ and $1$ satisfying 
		\[
		0\leq \alpha_1(p) < \alpha_2(p) < \dots < \alpha_{n_p}(p) < 1,
		\]
	\end{enumerate}
where $n_p$ is an integer between $1$ and $n$. 
\end{definition}

Here, the real numbers $\alpha_i(p)$'s are called the \textit{parabolic weights} assigned to the subspace $E_{p, i}$, for all $p\in D$. A \textit{quasi-parabolic structure} on $E$ consists of flags (without the parabolic weights) over all points of $D$. For a fixed parabolic structure, we call the set of all parabolic weights by $$\alpha =\{(\alpha_1(p),\alpha_2(p),\dots,\alpha_{n_p}(p))\}_{p\in D}.$$ The parabolic structure $\alpha$ is said to have \textit{full flags} if, for all $i$ and for all $p \in D$:
\[
\mathrm{dim}(E_{p,i}/E_{p,i+1}) = 1,
\] 
or equivalently $n_p=n$ for all $p\in D$.

In \cite{MY92}, Maruyama and Yokogawa provided a different way to define parabolic bundles using coherent sheaves. This approach helps define parabolic tensor products and parabolic duals.

\begin{definition}
     The \textit{parabolic degree} of a parabolic bundle $E_*$ is defined by
\[
\operatorname{pardeg}(E_*) \coloneqq \deg(E)+ \sum\limits_{p\in D}\sum\limits_{i=1}^{n_p} \alpha_i(p) \cdot \dim(E_{p,i}/E_{p,i+1}),
\]
where $\deg(E)$ is the degree of the underlying vector bundle $E$, and the \textit{parabolic slope} of $E_*$ is defined by
\[
\mu_{\mathrm{par}}(E_*) \coloneqq \frac{\text{pardeg}(E_*)}{\mathrm{rank}(E)}.
\]
\end{definition}

\begin{definition}
	A \textit{parabolic subbundle} $F_*$ of a parabolic bundle $E_*$ is a holomorphic subbundle $F\subset E$ of the underlying bundle $E$, which also has a parabolic structure induced on it. This induced parabolic structure on $F$ is defined as follows: for every $p\in D$, the filtration in $F_p$ is given by 
	\[
		F_p \eqqcolon F_{p,1}\supsetneq F_{p,2} \supsetneq \dots \supsetneq F_{p,n'_p} \supsetneq \{0\},
	\]
	where $F_{p,i}= F_p \cap E_{p,i}$, i.e. we are looking at the intersection with the filtration already given in $E_p$, and we are also removing any repeated subspaces in the sequence. The parabolic weights $0\leq \alpha'_1(p) < \alpha'_2(p) < \dots < \alpha'_{n'_p}(p) < 1$ are chosen to be the largest possible weights among those allowed after the intersections, i.e. 
	\[
	\alpha'_i(p) = \mathrm{max}_j\{\alpha_j(p)| F_p \cap E_{p,j}=F_{p,i} \}=\mathrm{max}_j\{\alpha_j(p)| F_{p,i}\subseteq E_{p,j} \}
	\]
	In other words, the parabolic weight associated to the subspace $F_{p,i}$ is the weight $\alpha_j(p)$ such that $F_{p,i}\subseteq E_{p,j}$ but $F_{p,i}\nsubseteq E_{p,j+1}$.
\end{definition}
\begin{definition}
	A parabolic bundle $E_*$ is said to be \textit{stable} (resp. \textit{semistable}) if for all nonzero proper parabolic subbundle $F_* \subset E_*$, we have
	\[
	\mu_{\mathrm{par}}(F_*) < \mu_{\mathrm{par}}(E_*) \hspace{0.2cm} (\mathrm{resp. } \hspace{0.2cm} \leq).
	\]
\end{definition}
\begin{definition}
A \textit{parabolic homomorphism} $\phi: E_* \to E^\prime_*$ between two parabolic bundles is a vector bundle homomorphism between the underlying vector bundles which satisfies 
\[
\alpha_i(p) > \alpha_j^\prime(p) \implies \phi(E_{p,i}) \subseteq E_{p,j+1}^\prime,
\]
at each $p \in D$. Moreover, we say a homomorphism is \textit{strongly parabolic} if, for every $p \in D$, we have
\[ \alpha_i(p) \geq \alpha_j^\prime(p) \implies \phi(E_{p,i}) \subseteq E_{p,j+1}^\prime.
\]
\end{definition}

We will use $\mathrm{PEnd}(E_*)$ and $\mathrm{SPEnd}(E_*)$ to denote the parabolic and strongly parabolic endomorphisms of $E_*$, respectively.
\subsection{Parabolic Higgs bundles}
	Let $K$ be the canonical line bundle on $X$. We define $K(D) \coloneqq K \otimes \mathcal{O}(D)$ with the trivial parabolic structure (i.e. weights are zero). 
\begin{definition}
A \textit{parabolic Higgs bundle} on $X$ consists of a parabolic bundle $E_*$ and a parabolic homomorphism $$\Phi: E_* \to E_* \otimes K(D),$$ i.e. for every $p \in D$ we have $\Phi(E_{p,i}) \subset E_{p,i} \otimes \left.K(D)\right|_p$. This homomorphism $\Phi$ is called the \textit{parabolic Higgs field}.
 
A \textit{strongly parabolic Higgs bundle} on $X$ consists of a parabolic bundle $E_*$ together with a \textit{strongly parabolic Higgs field}
\[
\Phi : E_* \longrightarrow E_* \otimes K(D),
\]
such that for every $p \in D$,
\[
\Phi(E_{p,i}) \subset E_{p,i+1} \otimes K(D)|_p.
\]

Equivalently, the residue of $\Phi$ at each marked point $p \in D$ is nilpotent with respect to the induced filtration on the fiber $E_p$. Indeed, the above condition implies that the induced endomorphism
\[
\operatorname{Res}_p(\Phi) : E_p \longrightarrow E_p
\]
strictly lowers the filtration, and hence is nilpotent.
\end{definition}

For a fixed parabolic bundle $E_*$, the set of strongly parabolic Higgs fields on $E_*$ is given by
\[
H^0\bigl(X,\mathrm{SPEnd}(E_*)\otimes K(D)\bigr),
\]
which is a vector subspace of
\[
H^0\bigl(X,\mathrm{PEnd}(E_*)\otimes K(D)\bigr).
\]
Indeed, the condition
\[
\Phi(E_{p,i}) \subset E_{p,i+1}\otimes K(D)|_p,\qquad p\in D,
\]
is linear in $\Phi$. In particular, the strongly parabolic condition defines a closed condition in families, and hence the strongly parabolic locus is closed in the moduli space of parabolic Higgs bundles.

\begin{definition}
    A subbundle $F_* \subset E_*$ of a parabolic Higgs bundle $(E_*,\Phi)$ is called \textit{$\Phi$-invariant} if $\Phi(F_*) \subset F_* \otimes K(D)$.
\end{definition}

\begin{definition}
	A parabolic Higgs bundle $(E_*,\Phi)$ is said to be \textit{stable} (resp. \textit{semistable}) if for every nonzero proper $\Phi$-invariant subbundle $F_* \subset E_*$, we have
	\[
	\mu_{\mathrm{par}}(F_*) < \mu_{\mathrm{par}}(E_*) \hspace{0.2cm} (\mathrm{resp. } \hspace{0.2cm} \leq).
	\]
\end{definition}
\begin{lemma}\label{induce}
	Let $E_*$ and $E'_*$ be parabolic bundles together with the (strongly) parabolic Higgs fields $\Phi_E$ and $\Phi_{E'}$ respectively. Then there is a (strongly) parabolic Higgs field on the tensor product $E_* \otimes E'_*$ induced by $\Phi_E$ and $\Phi_{E'}$. Also, $\Phi_E$ induces a (strongly) parabolic Higgs field on the dual $E_*^\vee$.
\end{lemma}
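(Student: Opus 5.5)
The natural approach is to define the induced Higgs fields by the Leibniz rule and then check the parabolic conditions fiberwise at each $p\in D$. On the tensor product we set
\[
\Phi_{E\otimes E'} \coloneqq \Phi_E\otimes \mathrm{id}_{E'} + \mathrm{id}_E\otimes \Phi_{E'} : E\otimes E' \longrightarrow E\otimes E'\otimes K(D).
\]
On the dual we set $\Phi_{E^\vee}\coloneqq -\Phi_E^{t}$, that is, $(\Phi_{E^\vee}f)(v) = -f(\Phi_E v)$ for local sections $f$ of $E^\vee$ and $v$ of $E$. Both are holomorphic bundle maps twisted by $K(D)$. Away from $D$ the parabolic structures impose no conditions, so the whole content of the lemma is the behaviour at the points of $D$.

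For the tensor product, we use the description of its parabolic structure at $p$ as a filtration indexed by weights. The parabolic tensor product is most cleanly handled in the Maruyama--Yokogawa sheaf description of \cite{MY92}. In fibre terms, the weight-$\ge c$ piece of $(E\otimes E')_p$ is spanned (after the usual reduction of weights modulo $1$ and the twist by $\mathcal{O}(D)$ when weights sum past $1$) by the subspaces $E_{p,i}\otimes E'_{p,j}$ with $\alpha_i(p)+\alpha'_j(p)\ge c$. The parabolic condition on $\Phi_E$ says that $\Phi_E$ preserves each $E_{p,i}$, and similarly for $\Phi_{E'}$. Then $\Phi_E\otimes\mathrm{id}+\mathrm{id}\otimes\Phi_{E'}$ maps $E_{p,i}\otimes E'_{p,j}$ into itself, so it preserves each piece of the filtration. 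In the strongly parabolic case, $\Phi_E(E_{p,i})\subset E_{p,i+1}$ and $\Phi_{E'}(E'_{p,j})\subset E'_{p,j+1}$. Each summand of the Leibniz expression then strictly raises the weight of at least one factor, hence maps the weight-$\ge c$ piece into a piece of strictly larger weight. This is exactly the strongly parabolic condition.

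For the dual, the parabolic structure on $E^\vee_*$ at $p$ is given by annihilators: the piece of $E_p^\vee$ of weight roughly $1-\alpha_i(p)$ is $\mathrm{Ann}(E_{p,i+1})$, up to the same shift conventions. If $f$ annihilates $E_{p,i+1}$ and $\Phi_E(E_{p,i+1})\subset E_{p,i+1}$, then $-f\circ\Phi_E$ also annihilates $E_{p,i+1}$. If instead $\Phi_E(E_{p,i})\subset E_{p,i+1}$, then $f\circ\Phi_E$ annihilates $E_{p,i}$, which is the next larger piece in the dual filtration. This gives the parabolic, respectively strongly parabolic, condition. Equivalently, one can note that $E^\vee_*$ is characterised by the parabolic pairing $E_*\otimes E^\vee_*\to\mathcal{O}_X$, and $-\Phi_E^t$ is the unique field making this pairing $\Phi$-equivariant, with the trivial Higgs field on $\mathcal{O}_X$.

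The main obstacle is the bookkeeping of the parabolic structure on $E_*\otimes E'_*$ and $E^\vee_*$ when weights wrap around, that is, when $\alpha_i+\alpha'_j\ge 1$ or $\alpha_i=0$ in the dual. There the underlying bundle involves elementary modifications along $D$. I would handle this by working with the filtered-sheaf description $\{E_t\}_{t\in\mathbb{R}}$ of \cite{MY92}. There $(E\otimes E')_t=\sum_{s+s'=t}E_s\otimes E'_{s'}$ and $(E^\vee)_t=\mathcal{H}om(E_{(-t)^+},\mathcal{O})$. The conditions become: a Higgs field is parabolic iff $\Phi(E_t)\subset E_t\otimes K(D)$ for all $t$, and strongly parabolic iff $\Phi(E_t)\subset E_{t+\epsilon}\otimes K(D)$ for some $\epsilon>0$ and all $t$. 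In this form both verifications above are formal and independent of the wrap-around.
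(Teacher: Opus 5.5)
Your proof is correct, and it goes further than the paper: the paper gives no argument of its own and simply cites \cite[Lemma 4]{BS12}. You verify the statement directly. You define the induced fields by the Leibniz rule and by $-\Phi_E^{t}$, and you work in the filtered-sheaf language of \cite{MY92}. There, \emph{parabolic} means $\Phi(E_t)\subset E_t\otimes K(D)$ for all $t$, and \emph{strongly parabolic} means $\Phi(E_t)\subset E_{t+\epsilon}\otimes K(D)$ for some $\epsilon>0$. In that language the check is one line, for instance $(\Phi_E\otimes\mathrm{id})(E_s\otimes E'_{s'})\subset E_{s+\epsilon}\otimes E'_{s'}\otimes K(D)\subset (E\otimes E')_{s+s'+\epsilon}\otimes K(D)$. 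The case $t=0$ then gives holomorphicity on the bundles underlying $E_*\otimes E'_*$ and $E_*^\vee$. These are in general elementary modifications of $E\otimes E'$ and $E^\vee$ along $D$; the paper itself notes that $E\otimes E$ is only a subsheaf of the bundle underlying $E_*\otimes E_*$.

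What your route buys is that the wrap-around bookkeeping disappears, whereas the citation leaves it hidden. Your fibrewise arguments are only literally valid when no weights wrap, so you are right to make the filtered version the actual proof.

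One normalisation slip. With the left-continuous convention $E_0=E$, $E_{t+1}=E_t(-D)$, $E_{t-\epsilon}=E_t$, the dual is
\[
(E^\vee)_t=\{f : f(E_s)\subset \mathcal{O}_X(-\lceil s+t\rceil D)\ \text{for all } s\}=\mathcal{H}om\bigl(E_{(-t)^+},\mathcal{O}_X(-D)\bigr),
\]
not $\mathcal{H}om(E_{(-t)^+},\mathcal{O}_X)$. Your version would give the trivial parabolic structure a dual with underlying bundle $E^\vee(D)$. This does not affect your verification, which only uses that $(E^\vee)_t$ is cut out by conditions on $f(E_s)$ depending on $s+t$.
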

\begin{proof}
	See \cite[Lemma 4]{BS12}.
\end{proof}	

\subsection{Moduli of parabolic Higgs bundles and Hitchin morphism}
 In \cite{K93}, Konno constructed the moduli space $\mathcal{M}_{\mathrm{Higgs}}(n,d,\alpha)$ of semistable parabolic Higgs bundles of fixed rank $n$, degree $d$, and parabolic structure $\alpha$ over $X$ (see also \cite{Y93}, \cite{BY96} for more details). It is a normal quasi-projective variety. The stable locus of $\mathcal{M}_{\mathrm{Higgs}}(n,d,\alpha)$ is an open subvariety of $\mathcal{M}_{\mathrm{Higgs}}(n,d,\alpha)$ and is the smooth locus. The parabolic form of Serre duality (see \cite{BY96}, \cite{Y95}) states that 
\begin{equation}\label{parSerre}
H^1(X,\mathrm{PEnd}(E_*)) \cong H^0(X,\mathrm{SPEnd}(E_*) \otimes K(D))^*.
\end{equation}
Hence, by deformation theory, there exists an open embedding $$T^*\mathcal{M}(n,d,\alpha) \xhookrightarrow{}  \mathcal{M}^{\mathrm{st}}_{\mathrm{Higgs}}(n,d,\alpha),$$ where $T^*\mathcal{M}(n,d,\alpha)$ is the cotangent bundle of the moduli space $\mathcal{M}(n,d,\alpha)$  of parabolic bundles (see \cite{MS80}), and $\mathcal{M}^{\mathrm{st}}_{\mathrm{Higgs}}(n,d,\alpha) \subset \mathcal{M}_{\mathrm{Higgs}}(n,d,\alpha)$ is the locus of strongly parabolic Higgs bundles.

\subsubsection{Parabolic Hitchin morphism}
Let $\mathcal{S}$ denote the total space of $K(D)$. Let $p: \mathcal{S} \to X $ be the natural projection map, and let $x \in H^0(\mathcal{S}, p^*K(D))$ be the tautological section. Let $(E_*,\Phi)$ be a semistable parabolic Higgs bundle of rank $n$, and let
\[
\det(x\,\mathrm{Id} - p^*\Phi) = x^n + \tilde{s}_1 x^{n-1} + \cdots + \tilde{s}_n
\]
be the characteristic polynomial of $\Phi$, where $\tilde{s}_i = p^* s_i$ for some
\[
s_i \in H^0\bigl(X, K^i(D^i)\bigr).
\]
Here,
\[
K^i(D^j) := K^{\otimes i} \otimes \mathcal{O}(D)^{\otimes j}.
\]

Sometimes we will write $K^i(D^i)$ as $K(D)^i$. Therefore, the parabolic \textit{Hitchin morphism} is defined by
\begin{align*}
h :\mathcal{M}_{\mathrm{Higgs}}(n,d,\alpha) &\longrightarrow \label{key}\mathcal{A} \coloneqq \bigoplus_{i=1}^n H^0(X, K^i(D^{i}))  \\
(E_*,\Phi) &\longmapsto (s_1,\dots,s_n),
\end{align*}
i.e. $h$ maps a parabolic Higgs field to the coefficients of its characteristic polynomial. The Hitchin morphism $h$ is independent of the parabolic structure at each $p\in D$ because it only relies on the Higgs field $\Phi$ and $K(D)$. Moreover, $h$ is a proper morphism (see \cite{M94} for details). 

If $\Phi$ is a strongly parabolic Higgs field, then its residue at each marked point $p \in D$ is nilpotent, and hence $s_i \in H^0(X, K^i(D^{i-1}))$. Therefore the restriction of the Hitchin morphism to the strongly parabolic locus is given by
\[
h_{\mathrm{st}} \coloneqq \left.h\right|_{\mathcal{M}^{\mathrm{st}}_{\mathrm{Higgs}}(n,d,\alpha)} :\mathcal{M}^{\mathrm{st}}_{\mathrm{Higgs}}(n,d,\alpha) \longrightarrow \label{key}\mathcal{A}^{\mathrm{st}} \coloneqq \bigoplus_{i=1}^n H^0(X, K^i(D^{i-1})).
\]
The fiber $h^{-1}(0)$ is called the \textit{nilpotent cone} of the Hitchin morphism $h$. One can see that $h^{-1}(0)$ consists of all nilpotent Higgs fields. 

\subsection{Symplectic and orthogonal parabolic bundles}

By \cite{BMW11}, for rational parabolic weights, symplectic and orthogonal parabolic bundles can be interpreted in terms of parabolic principal $G$-bundles, where $G$ is the corresponding symplectic or orthogonal group. We will use this identification throughout.

\medskip

\noindent\textbf{Symplectic case.}
Let us consider the standard symplectic form $J$ on $\mathbb{C}^{2m}$ given by
\[
J = \begin{bmatrix} 0 & I_{m} \\ -I_{m} & 0 \end{bmatrix}.
\]
The general symplectic group $\mathrm{Gp}(2m,\mathbb{C})$ is defined by
\[
\mathrm{Gp}(2m,\mathbb{C}) = \{ M \in \mathrm{GL}(2m,\mathbb{C}) : MJM^t=\mu_M J \text{ for some } \mu_M \in \mathbb{C}^*\}.
\]
It is an extension of $\mathbb{C}^*$ by $\mathrm{Sp}(2m,\mathbb{C})$, that is, there exists a short exact sequence
\[
1 \to \mathrm{Sp}(2m,\mathbb{C}) \longrightarrow \mathrm{Gp}(2m,\mathbb{C}) \xrightarrow[]{p} \mathbb{C}^* \to 1,
\]
where $p(M)=\mu_M$. In particular,
\[
\det(M)=p(M)^m=\mu_M^m
\]
for all $M \in \mathrm{Gp}(2m,\mathbb{C})$.

The Lie algebra of $\mathrm{Gp}(2m,\mathbb{C})$ is given by
\[
\mathfrak{gp}(2m,\mathbb{C}) = \{A \in \mathfrak{gl}(2m,\mathbb{C}) : AJ + JA^t = \tfrac{\mathrm{tr}(A)}{m}J\},
\]
and admits a decomposition
\[
\mathfrak{gp}(2m,\mathbb{C}) \cong \mathfrak{sp}(2m,\mathbb{C}) \oplus \mathbb{C}.
\]

\medskip

\noindent\textbf{Orthogonal case.}
Let $B$ be a symmetric non-degenerate bilinear form on $\mathbb{C}^{n}$, and denote by the same symbol the corresponding matrix. The general orthogonal group $\mathrm{GO}(n,\mathbb{C})$ is defined by
\begin{equation}\label{eqn1}
\mathrm{GO}(n,\mathbb{C}) = \{ M \in \mathrm{GL}(n,\mathbb{C}) : MBM^t=\mu_M B \text{ for some } \mu_M \in \mathbb{C}^*\}.
\end{equation}

There is a short exact sequence
\[
1 \to \mathrm{O}(n,\mathbb{C}) \longrightarrow \mathrm{GO}(n,\mathbb{C}) \xrightarrow[]{q} \mathbb{C}^* \to 1,
\]
where $q(M)=\mu_M$. In particular,
\[
\det(M)^2=q(M)^n.
\]
The subgroup $\mathrm{SO}(n,\mathbb{C}) \subset \mathrm{O}(n,\mathbb{C})$ consists of elements with determinant $1$.

\medskip

\noindent\textbf{Symplectic and orthogonal parabolic bundles.}
Let $L_*$ be a parabolic line bundle over $X$, and let $E_*$ be a parabolic bundle. Let
\[
\varphi : E_* \otimes E_* \longrightarrow L_*
\]
be a parabolic homomorphism. By tensoring with the dual parabolic bundle, we obtain an induced parabolic homomorphism
\begin{equation}\label{nondegenerate}
\tilde{\varphi} : E_* \longrightarrow L_* \otimes E^\vee_*.
\end{equation}

\begin{definition}\label{maindefn}
A \textit{symplectic parabolic bundle} consists of a pair $(E_*,\varphi)$ as above, where $\varphi$ is anti-symmetric and $\tilde{\varphi}$ is an isomorphism. 
	
An \textit{orthogonal parabolic bundle} consists of a pair $(E_*,\varphi)$ where $\varphi$ is symmetric and $\tilde{\varphi}$ is an isomorphism.
\end{definition}
 Given a symplectic or orthogonal parabolic bundle $(E_*,\varphi)$, let $E$ be its underlying vector bundle. Then $E \otimes E$ is a coherent subsheaf of the vector bundle underlying $E_*\otimes E_*$. Thus, $\varphi$ induces a vector bundle homomorphism
\begin{equation}\label{underlying}
\hat{\varphi} : E \otimes E \to L,
\end{equation}
where $L$ is the underlying line bundle of $L_*$. 

\begin{definition}
A subbundle $F \subset E$ of the underlying vector bundle $E$ of $E_*$ is said to be \textit{isotropic} if $\hat{\varphi}(F \otimes F)=0$.
\end{definition}

\begin{definition}\label{compatible}
Let $(E_*,\varphi)$ be a symplectic or orthogonal parabolic bundle on $X$. Thus, $\varphi : E_* \otimes E_* \to L_*$ is a bilinear form inducing an isomorphism
\[
\tilde{\varphi} : E_* \longrightarrow L_* \otimes E_*^\vee.
\]

Let 
\[
\Phi \in H^0\bigl(X,\mathrm{PEnd}(E_*)\otimes K(D)\bigr)
\]
be a parabolic Higgs field. By Lemma \ref{induce}, $\Phi$ induces a Higgs field $\Phi^\vee$ on $E_*^\vee$, and hence a Higgs field on $L_* \otimes E_*^\vee$. We say that $\Phi$ is \textit{compatible} with $\varphi$ if
\[
(\mathrm{Id}_{L_*}\otimes \Phi^\vee)\circ \tilde{\varphi}
=
(\tilde{\varphi}\otimes \mathrm{Id}_{K(D)})\circ \Phi.
\]
\end{definition}

\medskip

\begin{remark}
Let $(E_*,\varphi)$ be a symplectic or orthogonal parabolic bundle. The bilinear form $\varphi$ induces an isomorphism
\[
\widetilde{\varphi} : E_* \longrightarrow L_* \otimes E_*^\vee.
\]
A parabolic Higgs field $\Phi$ is said to be compatible with $\varphi$ if it corresponds, under this identification, to the induced Higgs field on $L_* \otimes E_*^\vee$.

Equivalently, at the level of fibers, this condition means that $\Phi$ is skew-symmetric (in the symplectic case) or symmetric (in the orthogonal case) with respect to the bilinear form $\varphi$ (for example see \cite{GGR09} for reference).
\end{remark}

\begin{definition}[Symplectic or orthogonal parabolic Higgs bundles]
A \textit{symplectic} (resp. \textit{orthogonal}) \textit{parabolic Higgs bundle} $(E_*,\varphi,\Phi)$ consists of a symplectic (resp. orthogonal) parabolic bundle $(E_*,\varphi)$ and a Higgs field $\Phi$ on $E_*$ that is compatible with $\varphi$.
\end{definition}

\begin{definition}
	A symplectic or orthogonal parabolic Higgs bundle $(E_*,\varphi,\Phi)$ is called \textit{stable} (resp. \textit{semistable}) if for all nonzero isotropic $\Phi$-invariant subbundle $F \subset E$ (i.e. $F_*$ is $\Phi$-invariant), we have
	\[
	\mu_{par}(F_*) < \mu_{par}(E_*) \hspace{0.4cm}(\text{resp.} \hspace{0.15cm} \mu_{par}(F_*) \leq \mu_{par}(E_*)).
	\]
\end{definition}

\subsubsection{Moduli space}
A description of the moduli space $\mathcal{M}^G(\alpha)$ of semistable parabolic $G$-bundles with fixed parabolic structure $\alpha$ and rank $r$ and degree $d$ was given in \cite{BBN01} and \cite{BR89}. They showed that it is a projective variety. Moreover, if the parabolic structure $\alpha$ has full flags at each marked points $p\in D$, then its dimension is given by
\[
\dim\mathcal{M}^G(\alpha)= \dim Z(G) +  (g-1)\dim(G) + r\dim(G/B),
\]
where $r$ is the number of points in $D$ and $Z(G)$ is the center of $G$. The final term arises because the flags considered at each point of $D$ are full flags, and $B$ represents the Borel subgroup of $G$ specified by $\alpha$. We will assume that the parabolic structures at all points of $D$ are full flags.

If $G=\mathrm{Sp}(2m,\mathbb{C})$ or $\mathrm{SO}(n,\mathbb{C})$, then
\[
\dim\mathcal{M}^G(\alpha) = (g-1)\dim(G) + r\dim(G/B).
\]
Moreover, the moduli space $\mathcal{M}^G_{\mathrm{Higgs}}(\alpha)$ of semistable parabolic $G$-Higgs bundles is a quasi-projective variety (see \cite{Y93}) and by the parabolic version of Serre duality, we know that
\[
\dim \mathcal{M}^{G,\mathrm{st}}_{\mathrm{Higgs}}(\alpha) = 2\dim \mathcal{M}^G(\alpha),
\]
where $\mathcal{M}^{G,\mathrm{st}}_{\mathrm{Higgs}}(\alpha) \subset \mathcal{M}_{\mathrm{Higgs}}(\alpha)$ is the strongly parabolic locus. 

For $G=\mathrm{Sp}(2m,\mathbb{C})$ and $\mathrm{SO}(2m+1,\mathbb{C})$, the respective moduli spaces $\mathcal{M}^{\mathrm{Sp},\mathrm{st}}_{\mathrm{Higgs}}(\alpha)$ and $\mathcal{M}^{\mathrm{SO}(2m+1),\mathrm{st}}_{\mathrm{Higgs}}(\alpha)$ have equal dimension, and it is given by
\[
\dim \mathcal{M}^{\mathrm{Sp},\mathrm{st}}_{\mathrm{Higgs}}(\alpha) =\dim \mathcal{M}^{\mathrm{SO}(2m+1),\mathrm{st}}_{\mathrm{Higgs}}(\alpha)= 2m(2m+1)(g-1) + 2m^2r.
\]
For $G=\mathrm{SO}(2m,\mathbb{C})$, the moduli space $\mathcal{M}^{\mathrm{SO}(2m),\mathrm{st}}_{\mathrm{Higgs}}(\alpha)$ has dimension
\[
\dim \mathcal{M}^{\mathrm{SO}(2m),\mathrm{st}}_{\mathrm{Higgs}}(\alpha) = 2m(2m-1)(g-1) + 2mr(m-1).
\]

We now use these structural properties of the moduli space to study the behaviour of the Hitchin morphism under the natural $\mathbb{C}^*$-action.

\section{Semiprojectivity}\label{semiproj}
In this section, we prove that the moduli spaces of semistable symplectic or orthogonal parabolic Higgs bundles over a smooth projective curve are semiprojective varieties.

	\begin{definition}[Semiprojective variety]\label{semiprojective}
	Let $Z$ be a quasi-projective variety with a $\mathbb{C}^*$-action $z \mapsto t\cdot z$, where $t \in \mathbb{C}^*$ and $z\in Z$. We say that $Z$ is a \textit{semiprojective} variety if the following conditions hold:
	\begin{enumerate}
	    \item the limit $\lim_{t\to 0} (t\cdot z)_{t\in \mathbb{C}^*}$ exists in $Z$ for all $z \in Z$.
	    \vspace{0.1cm}
	    \item the fixed point set $Z^{\mathbb{C}^*}$ of the $\mathbb{C}^*$-action on $Z$ is a proper variety.
\end{enumerate}
\end{definition}

\subsection{Semiprojectivity of the moduli space of semistable symplectic parabolic Higgs bundles}
An element $(E_*,\varphi,\Phi)$ of the moduli space $\mathcal{M}^{\mathrm{Sp}}_{\mathrm{Higgs}}(\alpha)$ can be interpreted as a rank $2m$ parabolic bundle $E_*$ equipped with a non-degenerate symplectic form $\langle \cdot , \cdot \rangle$ (i.e., the map $\varphi$) along with a holomorphic section $\Phi \in H^0(X, \mathrm{PEnd}(E_*)\otimes K(D))$ that satisfies
\[
\langle \Phi v, w \rangle = - \langle v, \Phi w \rangle.
\]
This identity is equivalent to the compatibility condition in Definition \ref{compatible}. Therefore, the characteristic polynomial of $\Phi$ is of the form
\[
\det(x\cdot I - \Phi) = x^{2m} + s_2x^{2m-2} + \cdots + s_{2m},
\]
where $\{s_2,\dots, s_{2m}\}$ is a basis for the invariant polynomials of the Lie algebra $\mathfrak{sp}(2m)$. Thus the Hitchin morphism for the symplectic case is given by
\begin{align}
\begin{split}
    h_{\mathrm{Sp}} : \mathcal{M}^{\mathrm{Sp}}_{\mathrm{Higgs}}(\alpha) &\rightarrow \mathcal{A}_{\mathrm{Sp}}\coloneqq \bigoplus_{i=1}^{m}H^0(X, K(D)^{2i})\\
    \Phi &\longmapsto (s_2,\dots,s_{2m}),
    \end{split}
\end{align}
(see \cite{R20} for more details) and its restriction to the strongly parabolic locus is given by
\begin{align}
    h_{\mathrm{Sp,st}} \coloneqq \left.h_{\mathrm{Sp}}\right|_{\mathcal{M}^{\mathrm{Sp,st}}_{\mathrm{Higgs}}(\alpha)}: \mathcal{M}^{\mathrm{Sp,st}}_{\mathrm{Higgs}}(\alpha) \rightarrow \mathcal{A}^{\mathrm{st}}_{\mathrm{Sp}}\coloneqq \bigoplus_{i=1}^{m}H^0(X, K^{2i}(D^{2i-1})).
\end{align}

There is a natural $\mathbb{C}^*$-action on the bases $\mathcal{A}_{\mathrm{Sp}}$ and $\mathcal{A}^{\mathrm{st}}_{\mathrm{Sp}}$ of the symplectic Hitchin morphisms, given by
\[
t\cdot (s_2,\dots , s_{2m}) \coloneqq (t^2s_2,\dots , t^{2m}s_{2m}).
\]
\begin{lemma}\label{equivariant}
    The Hitchin morphism $h_{\mathrm{Sp}} : \mathcal{M}^{\mathrm{Sp}}_{\mathrm{Higgs}}(\alpha) \rightarrow \mathcal{A}_{\mathrm{Sp}}$ is $\mathbb{C}^*$-equivariant.
\end{lemma}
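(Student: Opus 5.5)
The $\mathbb{C}^*$-action on $\mathcal{M}^{\mathrm{Sp}}_{\mathrm{Higgs}}(\alpha)$ is $t\cdot(E_*,\varphi,\Phi)=(E_*,\varphi,t\Phi)$, and the plan is to check directly that $h_{\mathrm{Sp}}(E_*,\varphi,t\Phi)=t\cdot h_{\mathrm{Sp}}(E_*,\varphi,\Phi)$. First I will confirm that the action is well defined. The field $t\Phi$ is again a parabolic Higgs field, since the condition $\Phi(E_{p,i})\subset E_{p,i}\otimes K(D)|_p$ is linear. It is still compatible with $\varphi$, because the identity $\langle t\Phi v,w\rangle=-\langle v,t\Phi w\rangle$ follows by multiplying $\langle \Phi v,w\rangle=-\langle v,\Phi w\rangle$ by $t$. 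Semistability is preserved, because a subbundle is $t\Phi$-invariant exactly when it is $\Phi$-invariant ($t\neq 0$), so the isotropic invariant subbundles to be tested are the same. The action also respects S-equivalence, by the same argument applied to the Jordan--Hölder filtrations, so it descends to the moduli space. The same argument shows that $t\Phi$ is strongly parabolic whenever $\Phi$ is, so the action preserves the strongly parabolic locus.

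The key computation is on characteristic polynomials. Fibrewise on the total space $\mathcal{S}$ of $K(D)$,
\[
\det(x\,\mathrm{Id}-t\,p^*\Phi)=t^{2m}\det\bigl(t^{-1}x\,\mathrm{Id}-p^*\Phi\bigr)=\sum_{j} t^{j}\,\tilde{s}_j\,x^{2m-j}.
\]
Here the second equality uses the expansion $\det(y\,\mathrm{Id}-p^*\Phi)=y^{2m}+\tilde s_2y^{2m-2}+\cdots+\tilde s_{2m}$ with $y=t^{-1}x$. Thus the $j$-th coefficient of $t\Phi$ is $t^j s_j$, since $s_j$ is the coefficient of the elementary symmetric function of degree $j$ in the eigenvalues. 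By the skew-symmetry noted before the definition of $h_{\mathrm{Sp}}$, the odd coefficients vanish, so $h_{\mathrm{Sp}}(t\Phi)=(t^2s_2,\dots,t^{2m}s_{2m})=t\cdot h_{\mathrm{Sp}}(\Phi)$.

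Finally, I will note that $h_{\mathrm{Sp}}$ is defined on points through the underlying Higgs field alone, so it is constant on S-equivalence classes. The identity therefore holds on the moduli space, and by restriction it also holds for $h_{\mathrm{Sp,st}}$. The only step needing care is that the action is well defined on the moduli space, namely the preservation of semistability and S-equivalence. This follows immediately from the invariance of the set of $\Phi$-invariant subbundles under scaling, so I expect no real obstacle.
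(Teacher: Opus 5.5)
Your proposal is correct and takes essentially the same route as the paper. Both arguments compute $\det(x\,\mathrm{Id}-t\Phi)$ and observe that the coefficient $s_j$ picks up the factor $t^j$, which matches the weights of the $\mathbb{C}^*$-action on $\mathcal{A}_{\mathrm{Sp}}$. Your extra checks (compatibility with $\varphi$, preservation of semistability, S-equivalence and the strongly parabolic locus) are taken for granted in the paper. One small imprecision: $h_{\mathrm{Sp}}$ is constant on S-equivalence classes because characteristic polynomials are multiplicative along the Jordan--H\"older filtration, not simply because $h_{\mathrm{Sp}}$ depends only on the Higgs field.
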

\begin{proof}
    Let $h_{\mathrm{Sp}}((E_*,\varphi, \Phi)) = (s_2,\dots, s_{2m})$, where $s_i = \mathrm{tr}(\wedge^i\Phi)$. Then the characteristic polynomial corresponding to $t\Phi$ is 
    \[
\det(x\cdot I - t\Phi) = x^{2m} + t^2s_2x^{2m-2} + \cdots + t^{2m}s_{2m}.
\]
Thus, 
\begin{align*}
   h_{\mathrm{Sp}}\big((t\cdot (E_*,\varphi,\Phi))\big) &=h_{\mathrm{Sp}}\big((E_*,\varphi,t\Phi)\big)\\ 
   &=(t^2s_2,\dots , t^{2m}s_{2m}) \\
   &= t\cdot (s_2,\dots , s_{2m}) \\
   &= t\cdot h_{\mathrm{Sp}}\big((E_*,\varphi,\Phi)\big). 
\end{align*}

Hence, $h_{\mathrm{Sp}}$ is $\mathbb{C}^*$-equivariant.
\end{proof}

We now study the behaviour of the $\mathbb{C}^*$-action on the moduli space and show that limits exist as $t \to 0$.
\begin{lemma}\label{limit1}
Let $(E_*,\varphi,\Phi)$ be a semistable symplectic parabolic Higgs bundle. Then $\lim_{t\to 0} (E_*,\varphi,t\Phi)$ exists in $\mathcal{M}^{\mathrm{Sp}}_{\mathrm{Higgs}}(\alpha)$.
\end{lemma}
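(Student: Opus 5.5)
The plan is to use the $\mathbb{C}^*$-equivariance of Lemma \ref{equivariant} together with properness of $h_{\mathrm{Sp}}$, via the valuative criterion. Consider the orbit map
\[
f:\mathbb{C}^*\longrightarrow \mathcal{M}^{\mathrm{Sp}}_{\mathrm{Higgs}}(\alpha),\qquad t\longmapsto (E_*,\varphi,t\Phi).
\]
By Lemma \ref{equivariant}, $h_{\mathrm{Sp}}\circ f(t)=(t^2s_2,\dots,t^{2m}s_{2m})$. This expression visibly extends to a morphism $\bar g:\mathbb{C}\to\mathcal{A}_{\mathrm{Sp}}$ with $\bar g(0)=0$. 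We thus have a commutative square: $\mathbb{C}^*\to\mathcal{M}^{\mathrm{Sp}}_{\mathrm{Higgs}}(\alpha)$ on top, $\mathbb{C}\to\mathcal{A}_{\mathrm{Sp}}$ on the bottom, and $h_{\mathrm{Sp}}$ on the right.

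The key input is that $h_{\mathrm{Sp}}$ is proper. I will deduce this from properness of the $\mathrm{GL}$ parabolic Hitchin map $h$ (\cite{M94}). There is a forgetful morphism $\mathcal{M}^{\mathrm{Sp}}_{\mathrm{Higgs}}(\alpha)\to\mathcal{M}_{\mathrm{Higgs}}(2m,d,\alpha)$. It is well defined because a semistable symplectic parabolic Higgs bundle has semistable underlying parabolic Higgs bundle; the usual argument passes through the maximal destabilizing subobject $F$, which is $\Phi$-invariant, and its annihilator under $\varphi$. This forgetful morphism is finite, since a given underlying object carries only finitely many compatible forms up to isomorphism. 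The linear map $\mathcal{A}_{\mathrm{Sp}}\hookrightarrow\mathcal{A}$, which inserts zeros in the odd coefficients, is a closed embedding, and the square formed by these maps with $h_{\mathrm{Sp}}$ and $h$ commutes. Hence $h_{\mathrm{Sp}}$ is the composition of a finite map with the proper map $h$, followed by the inverse of a closed embedding onto its image. It is therefore proper. Alternatively, one can cite properness of the parabolic $G$-Hitchin map for $G=\mathrm{Sp}(2m,\mathbb{C})$ directly from \cite{BR89,R20}.

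Given properness, apply the valuative criterion to the DVR $\mathcal{O}_{\mathbb{C},0}$. The map $f$ on the punctured neighbourhood lifts uniquely over $\bar g$ to a morphism $\bar f:\mathbb{C}\to\mathcal{M}^{\mathrm{Sp}}_{\mathrm{Higgs}}(\alpha)$, and $\bar f(0)$ is the required limit. It lies in $h_{\mathrm{Sp}}^{-1}(0)$, the nilpotent cone, and is $\mathbb{C}^*$-fixed. The same argument applies to the strongly parabolic locus, because that locus is closed, so $h_{\mathrm{Sp,st}}$ is also proper.

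The main obstacle is justifying properness of $h_{\mathrm{Sp}}$, specifically that the forgetful map preserves semistability and is finite, or proper. If this proves delicate, I will instead invoke properness of the $G$-Hitchin map in the parabolic principal bundle framework of \cite{BMW11,BR89}, using the identification of symplectic parabolic bundles with parabolic $\mathrm{Sp}(2m,\mathbb{C})$-bundles for rational weights.
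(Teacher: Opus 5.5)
Your proposal is essentially the paper's proof: the orbit map $t\mapsto (E_*,\varphi,t\Phi)$, the extension of $h_{\mathrm{Sp}}\circ f$ to $\mathbb{C}$ with value $0$ at the origin via Lemma \ref{equivariant}, and then the valuative criterion applied to the proper map $h_{\mathrm{Sp}}$. The paper takes properness of $h_{\mathrm{Sp}}$ as known, so your fallback of citing it is exactly what the paper does; your primary justification is incomplete as stated, because ``finitely many compatible forms up to isomorphism'' gives only quasi-finiteness of the forgetful map, whereas you need its finiteness, or really its properness, which is a nontrivial semistable-reduction statement you have not supplied.
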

\begin{proof}
Consider the morphism 
\begin{align*}
    f: \mathbb{C}^* &\longrightarrow \mathcal{M}^{\mathrm{Sp}}_{\mathrm{Higgs}}(\alpha)\\
    t &\longmapsto (E_*,\varphi,t\Phi).
\end{align*}
Since the Hitchin map $h_{\mathrm{Sp}}$ is $\mathbb{C}^*$-equivariant by Lemma \ref{equivariant}, we have
\[
\lim_{t\to 0} h_{\mathrm{Sp}}\big((E_*,\varphi,t\Phi)\big) = \lim_{t \to 0} t \cdot h_{\mathrm{Sp}}\big((E_*,\varphi,\Phi)\big) = 0.
\]
Therefore, the composition morphism 
$$F \coloneqq h_{\mathrm{Sp}} \circ f : \mathbb{C}^* \longrightarrow \mathcal{A}_{\mathrm{Sp}}$$ 
extends to a morphism 
$$\hat{F} : \mathbb{C} \longrightarrow \mathcal{A}_{\mathrm{Sp}},$$ 
since the $\mathbb{C}^*$-action on the Hitchin base is algebraic, given by polynomial scaling, and the above limit shows that $\hat F(0)=0$. Since the Hitchin map $h_{\mathrm{Sp}}$ is proper, it satisfies the valuative criterion of properness. The morphism $f : \mathbb{C}^* \to \mathcal{M}^{\mathrm{Sp}}_{\mathrm{Higgs}}(\alpha)$ corresponds to a morphism from $\operatorname{Spec}(\mathbb{C}((t)))$, and the extension $\hat{F}$ gives a morphism from $\operatorname{Spec}(\mathbb{C}[[t]])$ to $\mathcal{A}_{\mathrm{Sp}}$. Therefore, by the valuative criterion, $f$ extends uniquely to a morphism
\[
\hat{f}: \mathbb{C} \longrightarrow \mathcal{M}^{\mathrm{Sp}}_{\mathrm{Higgs}}(\alpha).
\] Therefore, $\lim_{t\to 0} (E_*,\varphi,t\Phi)$ exists in $\mathcal{M}^{\mathrm{Sp}}_{\mathrm{Higgs}}(\alpha)$.
\end{proof}

We next analyze the fixed point locus of the $\mathbb{C}^*$-action and relate it to the Hitchin fiber over $0$.

\begin{lemma}\label{fixed1}
The fixed point set under the $\mathbb{C}^*$-action on the moduli space $\mathcal{M}^{\mathrm{Sp}}_{\mathrm{Higgs}}(\alpha)$ is proper in $h_{\mathrm{Sp}}^{-1}(0) \subset \mathcal{M}^{\mathrm{Sp}}_{\mathrm{Higgs}}(\alpha)$.
\end{lemma}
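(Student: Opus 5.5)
The plan is to show two things: the fixed locus lies inside the nilpotent cone $h_{\mathrm{Sp}}^{-1}(0)$, and it is closed there; properness then comes from the properness of $h_{\mathrm{Sp}}$.

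\textbf{Step 1: fixed points lie in $h_{\mathrm{Sp}}^{-1}(0)$.} Let $z=(E_*,\varphi,\Phi)$ be a fixed point of the $\mathbb{C}^*$-action, and write $h_{\mathrm{Sp}}(z)=(s_2,\dots,s_{2m})$. By Lemma \ref{equivariant},
\[
(s_2,\dots,s_{2m}) = h_{\mathrm{Sp}}(t\cdot z) = (t^2s_2,\dots,t^{2m}s_{2m})
\]
for every $t\in\mathbb{C}^*$. Choosing $t$ with $t^{2i}\neq 1$ for all $i$, for instance $t=2$, forces $s_{2i}=0$ for all $i$. Hence $\mathcal{M}^{\mathrm{Sp}}_{\mathrm{Higgs}}(\alpha)^{\mathbb{C}^*}\subset h_{\mathrm{Sp}}^{-1}(0)$.

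\textbf{Step 2: the fixed locus is closed.} The action $\mathbb{C}^*\times \mathcal{M}^{\mathrm{Sp}}_{\mathrm{Higgs}}(\alpha)\to\mathcal{M}^{\mathrm{Sp}}_{\mathrm{Higgs}}(\alpha)$ is algebraic, and the moduli space is quasi-projective, hence separated. So for each fixed $t$, the set $\{z : t\cdot z=z\}$ is the preimage of the diagonal under $z\mapsto(z,t\cdot z)$, and is therefore closed. The fixed locus is the intersection of these sets over all $t\in\mathbb{C}^*$, so it is closed in $\mathcal{M}^{\mathrm{Sp}}_{\mathrm{Higgs}}(\alpha)$, and in particular closed in $h_{\mathrm{Sp}}^{-1}(0)$.

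\textbf{Step 3: properness.} The Hitchin morphism $h_{\mathrm{Sp}}$ is proper. In the paper this is only asserted for $h$; for $h_{\mathrm{Sp}}$ one either uses the properness already invoked in Lemma \ref{limit1}, or deduces it as follows. The forgetful map $\mathcal{M}^{\mathrm{Sp}}_{\mathrm{Higgs}}(\alpha)\to\mathcal{M}_{\mathrm{Higgs}}(2m,d,\alpha)$ is finite, and the symplectic base embeds as a closed subspace of $\mathcal{A}$. Given properness of $h_{\mathrm{Sp}}$, the fiber $h_{\mathrm{Sp}}^{-1}(0)$ over a point is a proper variety. A closed subvariety of a proper variety is proper, so Step 2 gives that the fixed locus is proper.

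\textbf{Main obstacle.} The only nontrivial input is the properness of $h_{\mathrm{Sp}}$, specifically the finiteness of the forgetful map in the parabolic symplectic setting. I would take this from the $G$-bundle literature \cite{BR89,Y93} rather than reprove it. With that granted, the remaining steps are formal.
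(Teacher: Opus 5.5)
Your proof is correct and follows the same route as the paper. Equivariance of $h_{\mathrm{Sp}}$ forces the fixed locus into $h_{\mathrm{Sp}}^{-1}(0)$, the fixed locus is closed, and properness of $h_{\mathrm{Sp}}$ makes this closed subset of the nilpotent cone proper; your diagonal argument for closedness and your optional reduction of the properness of $h_{\mathrm{Sp}}$ to that of $h$ via the forgetful map only fill in details the paper takes for granted.
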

\begin{proof}
Note that the zero point in the Hitchin base $\mathcal{A}_{\mathrm{Sp}}$ is the only point that is fixed by the $\mathbb{C}^*$-action. Thus, $\mathcal{A}_{\mathrm{Sp}}^{\mathbb{C}^*} = \{0\}$.

Since the Hitchin morphism $h_{\mathrm{Sp}}$ is $\mathbb{C}^*$-equivariant by Lemma \ref{equivariant}, we have
\[
\mathcal{M}^{\mathrm{Sp}}_{\mathrm{Higgs}}(\alpha)^{\mathbb{C}^*}
\subset h_{\mathrm{Sp}}^{-1}(\mathcal{A}_{\mathrm{Sp}}^{\mathbb{C}^*})
= h_{\mathrm{Sp}}^{-1}(0).
\]

Moreover, the fixed point locus of the $\mathbb{C}^*$-action is closed, since it is defined by algebraic conditions. Hence,
\[
\mathcal{M}^{\mathrm{Sp}}_{\mathrm{Higgs}}(\alpha)^{\mathbb{C}^*}
\subset h_{\mathrm{Sp}}^{-1}(0)
\]
is a closed subset.

Since $h_{\mathrm{Sp}}$ is proper, the fiber $h_{\mathrm{Sp}}^{-1}(0)$ is proper. Therefore, being a closed subset of a proper variety,
\[
\mathcal{M}^{\mathrm{Sp}}_{\mathrm{Higgs}}(\alpha)^{\mathbb{C}^*}
\]
is proper.
\end{proof}
\begin{theorem}\label{symplectic}
The moduli space $\mathcal{M}^{\mathrm{Sp}}_{\mathrm{Higgs}}(\alpha)$ of semistable symplectic parabolic Higgs bundles is a semiprojective variety.
\end{theorem}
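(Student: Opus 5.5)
The plan is to verify the two conditions of Definition \ref{semiprojective} for $Z=\mathcal{M}^{\mathrm{Sp}}_{\mathrm{Higgs}}(\alpha)$ with the action $t\cdot(E_*,\varphi,\Phi)=(E_*,\varphi,t\Phi)$. Most of the work has already been done in the preceding lemmas.

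First I will check that this action is a well-defined algebraic action on the moduli space. If $\Phi$ is compatible with $\varphi$, then $t\Phi$ is also compatible, since the compatibility condition in Definition \ref{compatible} is linear in $\Phi$. Similarly, $t\Phi$ is still parabolic, and it is strongly parabolic whenever $\Phi$ is. Moreover, $\Phi$ and $t\Phi$ have exactly the same invariant isotropic subbundles, so semistability is preserved. The action is induced by scaling the Higgs field in the parameter space of the GIT construction of \cite{Y93}, and this scaling commutes with the group action, so it descends to an algebraic $\mathbb{C}^*$-action on $Z$. We also know that $Z$ is quasi-projective.

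For condition (1), Lemma \ref{limit1} gives that $\lim_{t\to 0}(E_*,\varphi,t\Phi)$ exists for every point. For condition (2), Lemma \ref{fixed1} gives that $Z^{\mathbb{C}^*}$ is a closed subset of the fibre $h_{\mathrm{Sp}}^{-1}(0)$. This fibre is proper because $h_{\mathrm{Sp}}$ is proper, so $Z^{\mathbb{C}^*}$ is proper. Together these show that $Z$ is semiprojective.

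The step I expect to need the most care is the properness of $h_{\mathrm{Sp}}$, which both lemmas use. I would deduce it from the properness of the $\mathrm{GL}$ parabolic Hitchin map $h$ \cite{M94}, as follows.
\begin{itemize}
\item The forgetful morphism $\mathcal{M}^{\mathrm{Sp}}_{\mathrm{Higgs}}(\alpha)\to\mathcal{M}_{\mathrm{Higgs}}(2m,d,\alpha)$ is finite. This uses the fact that a semistable symplectic parabolic Higgs bundle is semistable as a parabolic Higgs bundle, together with the finiteness of the set of compatible forms up to the relevant equivalence, as in \cite{BMW11}.
\item $h_{\mathrm{Sp}}$ is the composition of this forgetful morphism with $h$, followed by the closed embedding $\mathcal{A}_{\mathrm{Sp}}\hookrightarrow\mathcal{A}$, whose image is where the odd coefficients vanish.
\end{itemize}
A composition of proper maps is proper, and this gives the properness of $h_{\mathrm{Sp}}$.

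The strongly parabolic analogue follows in the same way. The strongly parabolic locus is closed in $Z$ and $\mathbb{C}^*$-stable. Hence limits of its points, which exist in $Z$, remain in the locus, and its fixed point set is closed in $Z^{\mathbb{C}^*}$, so it is proper.
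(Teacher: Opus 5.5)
Your proposal matches the paper's proof: the paper also deduces the theorem from the quasi-projectivity of $\mathcal{M}^{\mathrm{Sp}}_{\mathrm{Higgs}}(\alpha)$ together with Lemma \ref{limit1} (limits exist, via $\mathbb{C}^*$-equivariance and properness of $h_{\mathrm{Sp}}$ plus the valuative criterion) and Lemma \ref{fixed1} (the fixed locus is closed in the proper fibre $h_{\mathrm{Sp}}^{-1}(0)$); your check that the action is well defined is extra detail the paper omits. The paper simply takes properness of $h_{\mathrm{Sp}}$ as known, so your derivation of it is not needed, and as written it has a gap: finiteness of compatible forms only gives quasi-finiteness of the forgetful map, so its properness would need a separate (Langton-type) argument, and that step is better cited than derived this way.
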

\begin{proof}
The moduli space $\mathcal{M}^{\mathrm{Sp}}_{\mathrm{Higgs}}(\alpha)$ is a quasi-projective variety. Hence, the semiprojectivity follows from Lemma \ref{limit1} and \ref{fixed1}.
\end{proof}
By the same argument, we obtain
\begin{theorem}
    The moduli space $\mathcal{M}^{\mathrm{Sp},\mathrm{st}}_{\mathrm{Higgs}}(\alpha)$ of semistable symplectic strongly parabolic Higgs bundles is a semiprojective variety.
\end{theorem}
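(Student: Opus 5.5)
The plan is to run the argument of Theorem \ref{symplectic} again for the closed subvariety $\mathcal{M}^{\mathrm{Sp,st}}_{\mathrm{Higgs}}(\alpha)\subset \mathcal{M}^{\mathrm{Sp}}_{\mathrm{Higgs}}(\alpha)$, using the restricted Hitchin morphism $h_{\mathrm{Sp,st}}$. The first step is to check that the strongly parabolic locus is preserved by the $\mathbb{C}^*$-action. The condition $\Phi(E_{p,i})\subset E_{p,i+1}\otimes K(D)|_p$ is linear in $\Phi$, so $t\Phi$ is again strongly parabolic. Moreover, $t\Phi$ is still compatible with $\varphi$ and has the same invariant isotropic subbundles, so semistability is preserved. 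Hence $\mathbb{C}^*$ acts on $\mathcal{M}^{\mathrm{Sp,st}}_{\mathrm{Higgs}}(\alpha)$. This locus is closed in the quasi-projective variety $\mathcal{M}^{\mathrm{Sp}}_{\mathrm{Higgs}}(\alpha)$, by the closedness of the strongly parabolic condition in families noted earlier. It is therefore itself quasi-projective.

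Next I would establish equivariance and properness of $h_{\mathrm{Sp,st}}\colon \mathcal{M}^{\mathrm{Sp,st}}_{\mathrm{Higgs}}(\alpha)\to \mathcal{A}^{\mathrm{st}}_{\mathrm{Sp}}$. Equivariance with respect to $t\cdot(s_2,\dots,s_{2m})=(t^2s_2,\dots,t^{2m}s_{2m})$ is the computation of Lemma \ref{equivariant}. This subspace is $\mathbb{C}^*$-stable, and $h_{\mathrm{Sp}}$ maps the strongly parabolic locus into it because the residues are nilpotent. For properness, I would compose the closed immersion $\mathcal{M}^{\mathrm{Sp,st}}_{\mathrm{Higgs}}(\alpha)\hookrightarrow\mathcal{M}^{\mathrm{Sp}}_{\mathrm{Higgs}}(\alpha)$ with the proper map $h_{\mathrm{Sp}}$, which gives a proper map to $\mathcal{A}_{\mathrm{Sp}}$. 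Since $\mathcal{A}^{\mathrm{st}}_{\mathrm{Sp}}\subset\mathcal{A}_{\mathrm{Sp}}$ is a closed linear subspace through which this map factors, the corestriction $h_{\mathrm{Sp,st}}$ is proper as well (a proper map factoring through a separated target yields a proper map).

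With these two facts, the proofs of Lemma \ref{limit1} and Lemma \ref{fixed1} go through verbatim. For the limits: $t\mapsto h_{\mathrm{Sp,st}}(E_*,\varphi,t\Phi)$ extends over $0$ with value $0$, so the valuative criterion for the proper map $h_{\mathrm{Sp,st}}$ extends $t\mapsto(E_*,\varphi,t\Phi)$ to $\mathbb{C}$. Alternatively, one can take the limit in $\mathcal{M}^{\mathrm{Sp}}_{\mathrm{Higgs}}(\alpha)$ and note that it lies in the closed invariant subvariety. For the fixed points: $(\mathcal{A}^{\mathrm{st}}_{\mathrm{Sp}})^{\mathbb{C}^*}=\{0\}$, so the fixed locus is a closed subset of $h_{\mathrm{Sp,st}}^{-1}(0)$, which is proper.

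The only step needing genuine care is the closedness of the strongly parabolic locus. This is what gives both quasi-projectivity and properness of $h_{\mathrm{Sp,st}}$. I would justify it via the linear, closed nature of the residue-filtration condition in families, as stated in the preliminaries, applied to the symplectic moduli space.
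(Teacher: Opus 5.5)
Your proposal is correct and takes essentially the same approach as the paper. The paper simply says the result follows ``by the same argument'' as Theorem~\ref{symplectic}, i.e.\ Lemmas~\ref{limit1} and~\ref{fixed1} run with $h_{\mathrm{Sp,st}}$. You go further by supplying details the paper leaves implicit: that the strongly parabolic locus is $\mathbb{C}^*$-invariant and closed, and that the corestricted Hitchin map $h_{\mathrm{Sp,st}}$ is proper.
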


The same strategy applies to the orthogonal case, with the corresponding Hitchin bases determined by the invariant polynomials of $\mathfrak{so}(n)$.

\subsection{Semiprojectivity of the moduli space of semistable orthogonal parabolic Higgs bundles}
A point $(E_*,\varphi,\Phi)$ of the moduli space $\mathcal{M}^{\mathrm{SO}(2m)}_{\mathrm{Higgs}}(\alpha)$ (resp. $\mathcal{M}^{\mathrm{SO}(2m+1)}_{\mathrm{Higgs}}(\alpha)$) can be interpreted as a rank $2m$ (resp. rank $2m+1$) parabolic bundle $E_*$ equipped with a non-degenerate symmetric bilinear form $\langle \cdot , \cdot \rangle$ (i.e., the map $\varphi$) along with a holomorphic section $\Phi \in H^0(X, \mathrm{PEnd}(E_*)\otimes K(D))$ that satisfies
\[
\langle \Phi v, w \rangle =  \langle v, \Phi w \rangle.
\]
\subsubsection{Even orthogonal case $\mathrm{(}G=\mathrm{SO}(2m,\mathbb{C})\mathrm{)}$}

\vspace{0.01cm}

In the orthogonal case, similar to the symplectic case, the characteristic polynomial of a Higgs field $\Phi$ is given by
\[
\det(x\cdot I - \Phi) = x^{2m} + s_2x^{2m-2} + \cdots + s_{2m-2}x^2+ s_{2m},
\]
where $s_i = \mathrm{trace}(\wedge^i \Phi)$.  In this case, the last coefficient $s_{2m}=p_m^2$, where $p_m \in H^0(X,K(D)^m)$ is called the Pfaffian. Thus $\{s_2,\dots,s_{2m-2},p_m\}$ is a basis for the invariant polynomials of the Lie algebra $\mathfrak{so}(2m)$. Therefore, the even orthogonal Hitchin morphism is given by
\begin{align}
\begin{split}
      h_{\mathrm{SO}(2m)} : \mathcal{M}^{\mathrm{SO}(2m)}_{\mathrm{Higgs}}(\alpha) &\rightarrow \mathcal{A}_{\mathrm{SO}(2m)} \coloneqq \bigoplus_{i=1}^{m-1} H^0(X, K(D)^{2i}) \oplus H^0(X, K(D)^m)\\
    \Phi &\mapsto (s_2, \dots, s_{2m-2},p_m),
    \end{split}
\end{align}
and the restriction $ h_{\mathrm{SO}(2m), \mathrm{st}} \coloneqq \left.h_{\mathrm{SO}(2m)}\right|_{\mathcal{M}^{\mathrm{SO}(2m),\mathrm{st}}_{\mathrm{Higgs}}(\alpha)}$ of the Hitchin map to the strongly parabolic locus is given by
\begin{align}
    h_{\mathrm{SO}(2m), \mathrm{st}}: \mathcal{M}^{\mathrm{SO}(2m),\mathrm{st}}_{\mathrm{Higgs}}(\alpha) \rightarrow \mathcal{A}^{\mathrm{st}}_{\mathrm{SO}(2m)} \coloneqq \bigoplus_{i=1}^{m-1} H^0(X, K^{2i}(D^{2i-1})) \oplus H^0(X, K(D)^m).
\end{align}

There is a natural $\mathbb{C}^*$-action on the bases $\mathcal{A}_{\mathrm{SO}(2m)}$ and $\mathcal{A}^{\mathrm{st}}_{\mathrm{SO}(2m)}$ given by 
\[
t\cdot (s_2,\dots, s_{2m-2},p_m) \coloneqq (t^2s_2, \dots, t^{2m-2}s_{2m-2}, t^mp_m).
\]

The next result follows from the homogeneity of the characteristic polynomial under scaling of the Higgs field; see, for example, \cite{H87a, GGR09}.

\begin{lemma}\label{equivariant1}
    The Hitchin morphism $h_{\mathrm{SO}(2m)} : \mathcal{M}^{\mathrm{SO}(2m)}_{\mathrm{Higgs}}(\alpha) \rightarrow \mathcal{A}_{\mathrm{SO}(2m)}$ is $\mathbb{C}^*$-equivariant.
\end{lemma}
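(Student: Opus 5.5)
The plan is to follow the proof of Lemma \ref{equivariant}, with one extra step: showing that the Pfaffian scales with weight $m$. Let $h_{\mathrm{SO}(2m)}((E_*,\varphi,\Phi)) = (s_2,\dots,s_{2m-2},p_m)$. The $\mathbb{C}^*$-action on the moduli space is $t\cdot(E_*,\varphi,\Phi)=(E_*,\varphi,t\Phi)$. This is well defined because $t\Phi$ is still symmetric with respect to $\varphi$, still parabolic (or strongly parabolic), and has exactly the same $t\Phi$-invariant isotropic subbundles as $\Phi$; hence semistability is preserved. It therefore suffices to compute the invariants of $t\Phi$.

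\textbf{Step 1: the coefficients $s_{2i}$.} Since $s_{2i}=\mathrm{tr}(\wedge^{2i}\Phi)$ and $\wedge^{2i}(t\Phi)=t^{2i}\wedge^{2i}\Phi$, we get
\[
\det(x\cdot I-t\Phi)=x^{2m}+t^2s_2x^{2m-2}+\cdots+t^{2m}s_{2m},
\]
so the invariant $s_{2i}$ of $t\Phi$ is $t^{2i}s_{2i}$ for $1\le i\le m-1$.

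\textbf{Step 2: the Pfaffian.} Locally, trivialize $E$ and $K(D)$ so that $\varphi$ is given by a constant symmetric matrix $B$. Symmetry of $\Phi$ with respect to $\varphi$ means that $B\Phi$ is skew-symmetric (with the appropriate transpose convention). The Pfaffian is then $p_m=\mathrm{Pf}(B\Phi)$, suitably normalized by a fixed square root of $\det B$, and this local description glues to a global section of $K(D)^m$. The Pfaffian of a $2m\times 2m$ skew matrix is a homogeneous polynomial of degree $m$ in its entries, so
\[
\mathrm{Pf}(B\,t\Phi)=t^m\,\mathrm{Pf}(B\Phi),
\]
and the Pfaffian invariant of $t\Phi$ is $t^mp_m$.

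\textbf{Step 3: conclusion.} Putting the two steps together,
\[
h_{\mathrm{SO}(2m)}\bigl(t\cdot(E_*,\varphi,\Phi)\bigr)=(t^2s_2,\dots,t^{2m-2}s_{2m-2},t^mp_m)=t\cdot h_{\mathrm{SO}(2m)}\bigl((E_*,\varphi,\Phi)\bigr),
\]
which is the claimed equivariance. The same computation applies verbatim on the strongly parabolic locus with target $\mathcal{A}^{\mathrm{st}}_{\mathrm{SO}(2m)}$.

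\textbf{Main obstacle.} The only delicate point is Step 2: the Pfaffian must be an intrinsically, globally defined invariant rather than merely a square root of $s_{2m}$ chosen pointwise, since the sign of such a square root is not a priori consistent. I would handle this using the $\mathrm{SO}(2m)$-structure, that is, a fixed orientation, equivalently a trivialization of $\det E$ compatible with $\varphi$. This fixes the sign of $\mathrm{Pf}$ consistently across trivializations, because $\mathrm{Pf}(g^tAg)=\det(g)\,\mathrm{Pf}(A)$ and $\det g=1$ for $g\in\mathrm{SO}(2m)$. Once $p_m$ is known to be well defined, its homogeneity of degree $m$ is immediate.
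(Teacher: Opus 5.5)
Your argument is the paper's own: scale the characteristic polynomial, then read off weight $2i$ on $s_{2i}$ and weight $m$ on the Pfaffian. Your explicit appeal to the degree-$m$ homogeneity of $\mathrm{Pf}$ is in fact the right justification for the paper's last step. The paper infers $t^m p_m$ from the constant term $t^{2m}p_m^2$, which on its own only determines the Pfaffian of $t\Phi$ up to sign.

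There is, however, a false claim in Step 2, and it also appears in your check that the action is well defined.
\begin{itemize}
\item If $\Phi$ is \emph{symmetric} for the symmetric form, $\langle \Phi v,w\rangle=\langle v,\Phi w\rangle$, then $\Phi^tB=B\Phi$. Hence $(B\Phi)^t=\Phi^tB=B\Phi$ is symmetric, not skew.
\item Intrinsically, the bilinear form $(v,w)\mapsto\langle \Phi v,w\rangle$ is then symmetric. No transpose convention can make it skew, so $\mathrm{Pf}(B\Phi)$ is undefined.
\item The same convention also breaks Step 1. For $B=I$ and $\Phi=\mathrm{diag}(1,0,\dots,0)$ the characteristic polynomial is $x^{2m-1}(x-1)$, which is not even.
\end{itemize}

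What you need is that an $\SO(2m)$-Higgs field is skew-adjoint, $\langle \Phi v,w\rangle=-\langle v,\Phi w\rangle$, i.e.\ a section of $\mathfrak{so}(E)\otimes K(D)$. This is what Definition \ref{compatible} yields when the dual Higgs field is $-\Phi^t$. It is the same condition the paper states in the symplectic case, derived from the same definition; the sign of $\varphi$ cancels because the compatibility equation is linear in $\tilde{\varphi}$. With skew-adjointness, $(B\Phi)^t=-B\Phi$, the odd coefficients vanish, and your Steps 1--3 go through unchanged.

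The word ``symmetric'' comes from the paper's own description of orthogonal Higgs fields at the start of its orthogonal subsection. Still, as written your Step 2 asserts something false, so replace ``symmetric'' by ``skew-adjoint'' throughout.
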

\begin{proof}
Let $(E_*,\varphi,\Phi)\in \mathcal{M}^{\mathrm{SO}(2m)}_{\mathrm{Higgs}}(\alpha)$ and let $$h_{\mathrm{SO}(2m)}\big((E_*,\varphi,\Phi)\big) = (s_2,\dots, s_{2m-2},p_m^2).$$ Then the characteristic polynomial of $t\Phi$ is
    \[
\det(x\cdot I - t\Phi) = x^{2m} + t^2s_2x^{2m-2} + \cdots +t^{2m-2}s_{2m-2}+ t^{2m}p_m^2.
\]
Thus, 
\begin{align*}
   h_{\mathrm{SO}(2m)}\big((t\cdot (E_*,\varphi,\Phi))\big) &=h_{\mathrm{SO}(2m)}\big((E_*,\varphi,t\Phi)\big)\\ 
   &=(t^2s_2,\dots ,t^{2m-2}s_{2m-2}, t^{m}p_m) \\
   &= t\cdot (s_2,\dots ,s_{2m-2}, p_m) \\
   &= t\cdot h_{\mathrm{SO}(2m)}\big((E_*,\varphi,\Phi)\big). 
\end{align*}

Hence, $h_{\mathrm{SO}(2m)}$ is $\mathbb{C}^*$-equivariant.
\end{proof}

\begin{theorem}\label{even-orthogonal}
    The moduli space $\mathcal{M}^{\mathrm{SO}(2m)}_{\mathrm{Higgs}}(\alpha)$ of semistable even orthogonal parabolic Higgs bundles is a semiprojective variety.
\end{theorem}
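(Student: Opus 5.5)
We follow the same two-step strategy used for Theorem \ref{symplectic}, replacing the symplectic Hitchin morphism by the even orthogonal one $h_{\mathrm{SO}(2m)} : \mathcal{M}^{\mathrm{SO}(2m)}_{\mathrm{Higgs}}(\alpha) \to \mathcal{A}_{\mathrm{SO}(2m)}$. The inputs we need are already in place or assumed from the literature. First, $\mathcal{M}^{\mathrm{SO}(2m)}_{\mathrm{Higgs}}(\alpha)$ is quasi-projective. Second, $h_{\mathrm{SO}(2m)}$ is $\mathbb{C}^*$-equivariant by Lemma \ref{equivariant1}. Third, $h_{\mathrm{SO}(2m)}$ is proper. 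For properness, the even orthogonal Hitchin map is the composite of the forgetful map to the parabolic $\mathrm{GL}(2m)$-Higgs moduli (which is finite onto its image) with the proper parabolic Hitchin map $h$. Equivalently, the map $\mathcal{A}_{\mathrm{SO}(2m)} \to \mathcal{A}$ given by $(s_2,\dots,s_{2m-2},p_m)\mapsto (0,s_2,\dots,0,p_m^2)$ is finite, so properness of $h$ transfers.

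\textbf{Step 1: existence of limits.} Fix a semistable $(E_*,\varphi,\Phi)$ and set $f:\mathbb{C}^*\to \mathcal{M}^{\mathrm{SO}(2m)}_{\mathrm{Higgs}}(\alpha)$, $t\mapsto (E_*,\varphi,t\Phi)$. By Lemma \ref{equivariant1},
\[
h_{\mathrm{SO}(2m)}\circ f(t) = (t^2s_2,\dots,t^{2m-2}s_{2m-2},t^m p_m).
\]
All weights $2,4,\dots,2m-2,m$ are strictly positive, so this polynomial map extends to $\mathbb{C}$ and sends $0$ to $0$. The valuative criterion of properness for $h_{\mathrm{SO}(2m)}$, applied to $\operatorname{Spec}\mathbb{C}((t))\to\operatorname{Spec}\mathbb{C}[[t]]$, then extends $f$ over $0$. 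This is exactly as in Lemma \ref{limit1}.

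\textbf{Step 2: properness of the fixed locus.} Positivity of all weights again gives $\mathcal{A}_{\mathrm{SO}(2m)}^{\mathbb{C}^*}=\{0\}$. By equivariance, the fixed locus lies in $h_{\mathrm{SO}(2m)}^{-1}(0)$. The fixed locus is closed, and the nilpotent cone $h_{\mathrm{SO}(2m)}^{-1}(0)$ is proper because $h_{\mathrm{SO}(2m)}$ is proper. Hence the fixed locus is proper, as in Lemma \ref{fixed1}. Combining the two steps with Definition \ref{semiprojective} proves the theorem.

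The only point that differs from the symplectic case is the Pfaffian coordinate, which carries weight $m$ rather than an even weight. I expect this to be harmless, since only positivity of the weights is used. The real substantive input is the properness of $h_{\mathrm{SO}(2m)}$. If it is not simply quoted, I would prove it via the finite comparison with the $\mathrm{GL}$ Hitchin map described above, and I expect this to be the main (though standard) obstacle.
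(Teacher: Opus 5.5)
Your proposal is correct and is essentially the paper's proof: quasi-projectivity, $\mathbb{C}^*$-equivariance from Lemma \ref{equivariant1}, and properness of $h_{\mathrm{SO}(2m)}$ are fed into the arguments of Lemmas \ref{limit1} and \ref{fixed1}, and the only property of the weights that is used is that they are positive (including the Pfaffian's weight $m$). The paper simply asserts properness; in your optional justification of it, the finiteness of the base map $\pi:\mathcal{A}_{\mathrm{SO}(2m)}\to\mathcal{A}$ plays no role: what you need is finiteness of the forgetful map $\iota$ (so that $h\circ\iota=\pi\circ h_{\mathrm{SO}(2m)}$ is proper) plus separatedness of $\pi$, which lets you cancel $\pi$.
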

\begin{proof}
    Since $\mathcal{M}^{\mathrm{SO}(2m)}_{\mathrm{Higgs}}(\alpha)$ is a quasi-projective variety and the Hitchin map $h_{\mathrm{SO}(2m)}$ is $\mathbb{C}^*$-equivariant by Lemma \ref{equivariant1} and proper, we can conclude the semiprojectivity of $\mathcal{M}^{\mathrm{SO}(2m)}_{\mathrm{Higgs}}(\alpha)$ by using a method similar to the one used in Lemma \ref{limit1} and \ref{fixed1}.
\end{proof}
Similarly, we get
\begin{theorem}
    The moduli space $\mathcal{M}^{\mathrm{SO}(2m),\mathrm{st}}_{\mathrm{Higgs}}(\alpha)$ of semistable even orthogonal strongly parabolic Higgs bundles is a semiprojective variety.
\end{theorem}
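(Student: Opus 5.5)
The plan is to run the argument of Lemmas \ref{limit1} and \ref{fixed1} on the strongly parabolic locus. By definition it suffices to show two things for $\mathcal{M}^{\mathrm{SO}(2m),\mathrm{st}}_{\mathrm{Higgs}}(\alpha)$: it is quasi-projective with the scaling action, and it satisfies conditions (1) and (2) of Definition \ref{semiprojective}.

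First I will record the structural facts. The strongly parabolic condition $\Phi(E_{p,i})\subset E_{p,i+1}\otimes K(D)|_p$ is linear in $\Phi$ and closed in families, so $\mathcal{M}^{\mathrm{SO}(2m),\mathrm{st}}_{\mathrm{Higgs}}(\alpha)$ is a closed subvariety of $\mathcal{M}^{\mathrm{SO}(2m)}_{\mathrm{Higgs}}(\alpha)$. Hence it is quasi-projective. The condition is also preserved by $\Phi\mapsto t\Phi$, so the locus is $\mathbb{C}^*$-stable. The restriction $h_{\mathrm{SO}(2m),\mathrm{st}}$ of a proper map to a closed subvariety is proper. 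It lands in $\mathcal{A}^{\mathrm{st}}_{\mathrm{SO}(2m)}$, a $\mathbb{C}^*$-stable linear subspace of $\mathcal{A}_{\mathrm{SO}(2m)}$ carrying the same weights $2,4,\dots,2m-2,m$. Therefore Lemma \ref{equivariant1} restricts to show that $h_{\mathrm{SO}(2m),\mathrm{st}}$ is $\mathbb{C}^*$-equivariant.

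For the existence of limits, fix $(E_*,\varphi,\Phi)$ strongly parabolic and consider $f:\mathbb{C}^*\to\mathcal{M}^{\mathrm{SO}(2m),\mathrm{st}}_{\mathrm{Higgs}}(\alpha)$, $t\mapsto(E_*,\varphi,t\Phi)$. By equivariance, $h_{\mathrm{SO}(2m),\mathrm{st}}\circ f$ is $t\mapsto(t^2s_2,\dots,t^{2m-2}s_{2m-2},t^mp_m)$. All weights are positive, so this is polynomial in $t$ and extends to $\mathbb{C}$ with value $0$ at $t=0$. The valuative criterion of properness for $h_{\mathrm{SO}(2m),\mathrm{st}}$ then extends $f$ to $\hat f:\mathbb{C}\to\mathcal{M}^{\mathrm{SO}(2m),\mathrm{st}}_{\mathrm{Higgs}}(\alpha)$, which gives the limit. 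Alternatively, the limit exists in the full moduli space by Theorem \ref{even-orthogonal}, and it lies in the closed $\mathbb{C}^*$-stable strongly parabolic locus.

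For properness of the fixed locus, the only $\mathbb{C}^*$-fixed point of $\mathcal{A}^{\mathrm{st}}_{\mathrm{SO}(2m)}$ is $0$, again because every weight is positive. Equivariance then gives $\mathcal{M}^{\mathrm{SO}(2m),\mathrm{st}}_{\mathrm{Higgs}}(\alpha)^{\mathbb{C}^*}\subset h_{\mathrm{SO}(2m),\mathrm{st}}^{-1}(0)$. The fixed locus is closed, and the fibre is proper, so the fixed locus is proper.

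The only delicate point is the Pfaffian coordinate, which has weight $m$ instead of an even weight. It needs checking in two places: that it still tends to $0$ as $t\to0$, and that it does not create additional fixed points on the base. Both follow from $m\ge1$, so this step should not be a real obstacle.
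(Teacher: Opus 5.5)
Your proposal is correct and is essentially the paper's own argument. The paper just says the strongly parabolic case follows ``similarly'' from the $\mathbb{C}^*$-equivariance and properness of the Hitchin map, using the valuative-criterion argument of Lemma \ref{limit1} and the argument of Lemma \ref{fixed1} that the fixed locus is closed in the proper fibre $h^{-1}(0)$. You additionally spell out the restriction steps the paper leaves implicit: closedness and $\mathbb{C}^*$-stability of the strongly parabolic locus, properness and equivariance of $h_{\mathrm{SO}(2m),\mathrm{st}}$, and the fact that the Pfaffian's weight $m$ is positive.
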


\subsubsection{Odd orthogonal case $\mathrm{(}G=\mathrm{SO}(2m+1,\mathbb{C})\mathrm{)}$}
Assuming the Higgs field $\Phi$ has distinct eigenvalues, its characteristic polynomial in this case is given by
\[
\det(x\cdot I - \Phi) = x(x^{2m} + s_2x^{2m-2} + \cdots + s_{2m-2}x^2+ s_{2m}).
\]
Thus, the Hitchin morphism for the odd orthogonal group is similar to the symplectic case, and it is given by
\begin{align}
\begin{split}
    h_{\mathrm{SO}(2m+1)} : \mathcal{M}^{\mathrm{SO}(2m+1)}_{\mathrm{Higgs}}(\alpha) &\rightarrow \mathcal{A}_{\mathrm{SO}(2m+1)} \coloneqq \oplus_{i=1}^{m} H^0(X, K(D)^{2i})\\
    \varphi &\mapsto (s_2, \dots, s_{2m-2},s_{2m}),
    \end{split}
\end{align}
and the restriction $ h_{\mathrm{SO}(2m+1), \mathrm{st}} \coloneqq \left.h_{\mathrm{SO}(2m+1)}\right|_{\mathcal{M}^{\mathrm{SO}(2m+1),\mathrm{st}}_{\mathrm{Higgs}}(\alpha)}$ of the Hitchin map to the strongly parabolic locus is given by
\begin{align}
    h_{\mathrm{SO}(2m+1), \mathrm{st}}: \mathcal{M}^{\mathrm{SO}(2m+1),\mathrm{st}}_{\mathrm{Higgs}}(\alpha) \rightarrow \mathcal{A}^{\mathrm{st}}_{\mathrm{SO}(2m+1)} \coloneqq \bigoplus_{i=1}^{m} H^0(X, K^{2i}(D^{2i-1})).
\end{align}

The natural $\mathbb{C}^*$-action on the bases $\mathcal{A}_{\mathrm{SO}(2m+1)}$ and $\mathcal{A}^{\mathrm{st}}_{\mathrm{SO}(2m+1)}$ are given by 
\[
t\cdot (s_2,\dots, s_{2m}) \coloneqq (t^2s_2, \dots, t^{2m}s_{2m}).
\]

\begin{lemma}\label{equivariant2}
    The Hitchin morphism $h_{\mathrm{SO}(2m+1)} : \mathcal{M}^{\mathrm{SO}(2m+1)}_{\mathrm{Higgs}}(\alpha) \rightarrow \mathcal{A}_{\mathrm{SO}(2m+1)}$ is $\mathbb{C}^*$-equivariant.
\end{lemma}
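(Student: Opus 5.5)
The plan is to repeat the computation from Lemma \ref{equivariant} and Lemma \ref{equivariant1}, now for the odd orthogonal characteristic polynomial. Take a point $(E_*,\varphi,\Phi)\in \mathcal{M}^{\mathrm{SO}(2m+1)}_{\mathrm{Higgs}}(\alpha)$ and write its image as $h_{\mathrm{SO}(2m+1)}\big((E_*,\varphi,\Phi)\big)=(s_2,\dots,s_{2m})$, where $s_i=\mathrm{tr}(\wedge^i\Phi)$. The $\mathbb{C}^*$-action on the moduli space is $t\cdot(E_*,\varphi,\Phi)=(E_*,\varphi,t\Phi)$. This is well defined on the moduli space for three reasons:
\begin{itemize}
\item[(i)] $t\Phi$ is still compatible with $\varphi$, since the condition $\langle \Phi v,w\rangle=\langle v,\Phi w\rangle$ is linear in $\Phi$;
\item[(ii)] $t\Phi$ is still (strongly) parabolic;
\item[(iii)] $t\Phi$-invariant isotropic subbundles coincide with $\Phi$-invariant ones, so semistability is preserved.
\end{itemize}

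The key step is the identity $\det(x\cdot I-t\Phi)=t^{2m+1}\det(t^{-1}x\cdot I-\Phi)$. Combined with the form $\det(x\cdot I-\Phi)=x(x^{2m}+s_2x^{2m-2}+\cdots+s_{2m})$, it gives
\[
\det(x\cdot I-t\Phi)=x\bigl(x^{2m}+t^2s_2x^{2m-2}+\cdots+t^{2m}s_{2m}\bigr).
\]
More intrinsically, $\mathrm{tr}(\wedge^i(t\Phi))=t^i\,\mathrm{tr}(\wedge^i\Phi)$, because $\wedge^i$ is homogeneous of degree $i$. Since only the even coefficients appear in $\mathcal{A}_{\mathrm{SO}(2m+1)}$, we obtain $h_{\mathrm{SO}(2m+1)}\big((E_*,\varphi,t\Phi)\big)=(t^2s_2,\dots,t^{2m}s_{2m})=t\cdot h_{\mathrm{SO}(2m+1)}\big((E_*,\varphi,\Phi)\big)$.

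The only point needing care is that the paper states the factorized form of the characteristic polynomial under a distinct-eigenvalue assumption. To avoid depending on that assumption, I would use the general fact that for $\Phi$ symmetric with respect to a nondegenerate symmetric form, the odd coefficients $\mathrm{tr}(\wedge^{2k+1}\Phi)$ vanish. This holds because the invariant polynomials of $\mathfrak{so}(2m+1)$ are generated by the even ones, so the factorization holds for every $\Phi$. With that in hand, the homogeneity argument above applies verbatim, and it applies equally to the strongly parabolic restriction $h_{\mathrm{SO}(2m+1),\mathrm{st}}$.
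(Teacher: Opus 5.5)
Your core argument is the paper's own: it uses homogeneity of the characteristic-polynomial coefficients under $\Phi\mapsto t\Phi$ to get $h_{\mathrm{SO}(2m+1)}\bigl((E_*,\varphi,t\Phi)\bigr)=(t^2s_2,\dots,t^{2m}s_{2m})$. Your checks (i)--(iii) that the action is well defined are a harmless addition the paper leaves implicit.

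Your last paragraph, however, contains a false statement. It is not load-bearing, so you should simply delete it.

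The claim that the odd coefficients $\mathrm{tr}(\wedge^{2k+1}\Phi)$ vanish for $\Phi$ \emph{symmetric} with respect to $\varphi$ is wrong. For example, $\Phi=\mathrm{Id}_E\otimes\theta$ with $0\neq\theta\in H^0(X,K(D))$ is a parabolic Higgs field, symmetric for $\varphi$, with $\mathrm{tr}\,\Phi=(2m+1)\theta\neq 0$. Your cited reason, the structure of the invariant polynomials of $\mathfrak{so}(2m+1)$, applies only to \emph{skew}-adjoint $\Phi$, i.e.\ $\langle\Phi v,w\rangle=-\langle v,\Phi w\rangle$. That skew condition is what Definition~\ref{compatible} actually gives, since the induced Higgs field on the dual is $\Phi^\vee=-\Phi^t$; the paper's displayed ``symmetric'' condition in the orthogonal case is a slip. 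For skew-adjoint $\Phi$ one has $\det(xI-\Phi)=\det(xI+\Phi^t)=\det(xI+\Phi)=-\det(-xI-\Phi)$. So the characteristic polynomial is odd, and the factorization holds without the distinct-eigenvalue hypothesis.

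More to the point, no factorization is needed at all. The map $h_{\mathrm{SO}(2m+1)}$ only records the even coefficients $s_{2i}=\mathrm{tr}(\wedge^{2i}\Phi)$. The identity you already wrote, $\mathrm{tr}(\wedge^i(t\Phi))=t^i\,\mathrm{tr}(\wedge^i\Phi)$, therefore gives equivariance directly under either convention.
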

\begin{proof}
Let $(E_*,\varphi,\Phi)\in \mathcal{M}^{\mathrm{SO}(2m+1)}_{\mathrm{Higgs}}(\alpha)$ and let $$h_{\mathrm{SO}(2m+1)}\big((E_*,\varphi,\Phi)\big) = (s_2,\dots, s_{2m-2},s_{2m}).$$
  Then the characteristic polynomial of $t\Phi$ is 
    \[
\det(x\cdot I - t\Phi) = x(x^{2m} + t^2s_2x^{2m-2} + \cdots + t^{2m}s_{2m}).
\]
Thus, 
\begin{align*}
   h_{\mathrm{SO}(2m+1)}\big((t\cdot (E_*,\varphi,\Phi))\big) &=h_{\mathrm{SO}(2m+1)}\big((E_*,\varphi,t\Phi)\big)\\ 
   &=(t^2s_2,\dots , t^{2m}s_{2m}) \\
   &= t\cdot (s_2,\dots ,s_{2m}) \\
   &= t\cdot h_{\mathrm{SO}(2m+1)}\big((E_*,\varphi,\Phi)\big). 
\end{align*}

Hence, $h_{\mathrm{SO}(2m+1)}$ is $\mathbb{C}^*$-equivariant.
\end{proof}

Therefore, we get the following theorem
\begin{theorem}\label{odd-orthogonal}
    The moduli space $\mathcal{M}^{\mathrm{SO}(2m+1)}_{\mathrm{Higgs}}(\alpha)$ of semistable odd orthogonal parabolic Higgs bundles is a semiprojective variety.
\end{theorem}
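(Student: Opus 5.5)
The plan is to follow exactly the template used for Theorem \ref{symplectic}. Concretely, I will verify the two conditions of Definition \ref{semiprojective} for $Z=\mathcal{M}^{\mathrm{SO}(2m+1)}_{\mathrm{Higgs}}(\alpha)$, with the $\mathbb{C}^*$-action $t\cdot(E_*,\varphi,\Phi)=(E_*,\varphi,t\Phi)$. This action is well defined: scaling preserves compatibility with $\varphi$, since symmetry of $\Phi$ with respect to $\langle\cdot,\cdot\rangle$ is linear in $\Phi$. It also preserves (semi)stability, because $\Phi$-invariant and $t\Phi$-invariant isotropic subbundles coincide for $t\neq 0$. Quasi-projectivity of $Z$ is already known from the preliminaries, via \cite{Y93}. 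The two ingredients are Lemma \ref{equivariant2}, which gives $\mathbb{C}^*$-equivariance of $h_{\mathrm{SO}(2m+1)}$ for the weights $(2,4,\dots,2m)$ on $\mathcal{A}_{\mathrm{SO}(2m+1)}$, and properness of $h_{\mathrm{SO}(2m+1)}$. I take properness as a standing input, as was done in the symplectic and even orthogonal cases.

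\emph{Existence of limits.} Fix $(E_*,\varphi,\Phi)$ and let $f:\mathbb{C}^*\to Z$ be the orbit map $t\mapsto (E_*,\varphi,t\Phi)$. By Lemma \ref{equivariant2}, $h_{\mathrm{SO}(2m+1)}\circ f(t)=(t^2s_2,\dots,t^{2m}s_{2m})$. All weights are positive, so this is polynomial in $t$ and extends to $\hat F:\mathbb{C}\to\mathcal{A}_{\mathrm{SO}(2m+1)}$ with $\hat F(0)=0$. Applying the valuative criterion of properness to $h_{\mathrm{SO}(2m+1)}$, with $\operatorname{Spec}\mathbb{C}((t))\to Z$ coming from $f$ and $\operatorname{Spec}\mathbb{C}[[t]]\to\mathcal{A}_{\mathrm{SO}(2m+1)}$ coming from $\hat F$, yields an extension of $f$ over $0$. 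Hence $\lim_{t\to0}t\cdot z$ exists, exactly as in Lemma \ref{limit1}.

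\emph{Properness of the fixed locus.} Because all weights $2i$ are strictly positive, $\mathcal{A}_{\mathrm{SO}(2m+1)}^{\mathbb{C}^*}=\{0\}$. Equivariance then forces $Z^{\mathbb{C}^*}\subset h_{\mathrm{SO}(2m+1)}^{-1}(0)$. The fixed locus of an algebraic action is closed, and the nilpotent cone $h_{\mathrm{SO}(2m+1)}^{-1}(0)$ is proper, being a fiber of a proper morphism. So $Z^{\mathbb{C}^*}$ is proper, as in Lemma \ref{fixed1}.

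The only point needing care is that the Hitchin map here records only $(s_2,\dots,s_{2m})$ from $\det(xI-\Phi)=x(x^{2m}+\cdots)$. I must check that this is really the full characteristic polynomial map composed with an isomorphism onto $\mathcal{A}_{\mathrm{SO}(2m+1)}$, so that properness from the $\mathrm{GL}$-setting \cite{M94} transfers. For $\Phi$ symmetric with respect to an odd-rank nondegenerate symmetric form, the odd coefficients vanish identically, and this holds without the distinct-eigenvalue assumption. The reason is that $\Phi^t=B^{-1}\Phi B$ up to sign conventions, so $\det(xI-\Phi)=-\det(-xI-\Phi)$ for skew-adjoint $\Phi$, which is the relevant $\mathfrak{so}$ condition. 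Hence $h_{\mathrm{SO}(2m+1)}$ is the restriction of the ordinary parabolic Hitchin map to the closed locus of orthogonal Higgs bundles, composed with a closed embedding of bases, and is therefore proper. I expect this properness transfer to be the main obstacle. The remaining steps are formal consequences of equivariance with positive weights.
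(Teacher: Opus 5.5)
Your proposal is correct and follows the paper's own proof. Like the paper, you use $\mathbb{C}^*$-equivariance of $h_{\mathrm{SO}(2m+1)}$ (Lemma \ref{equivariant2}) together with its properness, and you take that properness as given. You then get limits by the valuative criterion as in Lemma \ref{limit1}, and a proper fixed locus inside the nilpotent cone as in Lemma \ref{fixed1}. Your side remark is a useful refinement: for skew-adjoint $\Phi$ the odd coefficients of $\det(xI-\Phi)$ vanish identically, so the paper's distinct-eigenvalue assumption is unnecessary. If you do want to justify properness yourself, note that the forgetful map to the $\mathrm{GL}$ moduli space is in general finite, not a closed embedding, which is still enough.
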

\begin{proof}
    The proof is identical to that of Theorem \ref{even-orthogonal}, using the existence of limits of the $\mathbb{C}^*$-action and the closedness of the fixed point locus. Therefore, $\mathcal{M}^{\mathrm{SO}(2m+1)}_{\mathrm{Higgs}}(\alpha)$ is semiprojective.
\end{proof}

Similarly, we get
\begin{theorem}
        The moduli space $\mathcal{M}^{\mathrm{SO}(2m+1),\mathrm{st}}_{\mathrm{Higgs}}(\alpha)$ of semistable odd orthogonal strongly parabolic Higgs bundles is a semiprojective variety.
\end{theorem}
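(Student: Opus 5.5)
The strongly parabolic odd orthogonal case is handled exactly as the parabolic cases (Lemmas \ref{limit1}, \ref{fixed1} and Theorem \ref{odd-orthogonal}), using the restricted Hitchin morphism $h_{\mathrm{SO}(2m+1),\mathrm{st}}$. The plan has three steps.

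First I record two properties of $\mathcal{M}^{\mathrm{SO}(2m+1),\mathrm{st}}_{\mathrm{Higgs}}(\alpha)$.

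\begin{itemize}
\item It is $\mathbb{C}^*$-stable. If $\Phi$ satisfies $\Phi(E_{p,i})\subset E_{p,i+1}\otimes K(D)|_p$ and is symmetric for $\varphi$, then so is $t\Phi$, since both conditions are linear.
\item It is closed in $\mathcal{M}^{\mathrm{SO}(2m+1)}_{\mathrm{Higgs}}(\alpha)$. The strongly parabolic condition is closed in families, as noted in the preliminaries. Hence it is a quasi-projective variety with a $\mathbb{C}^*$-action.
\end{itemize}

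Second, I show that $h_{\mathrm{SO}(2m+1),\mathrm{st}}$ is proper and $\mathbb{C}^*$-equivariant for the weights $(t^2,\dots,t^{2m})$ on $\mathcal{A}^{\mathrm{st}}_{\mathrm{SO}(2m+1)}$.

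\begin{itemize}
\item Equivariance is the computation of Lemma \ref{equivariant2} restricted to this locus. One checks that the coefficients of $t\Phi$ still lie in $H^0(X,K^{2i}(D^{2i-1}))$, because nilpotency of the residues is preserved by scaling.
\item For properness, $h_{\mathrm{SO}(2m+1),\mathrm{st}}$ is the restriction of the proper map $h_{\mathrm{SO}(2m+1)}$ to a closed subvariety. Its image lies in the closed linear subspace $\mathcal{A}^{\mathrm{st}}_{\mathrm{SO}(2m+1)}\subset\mathcal{A}_{\mathrm{SO}(2m+1)}$. A proper morphism restricted to a closed subset, viewed as mapping into a closed subscheme containing its image, remains proper.
\end{itemize}

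Third, I run the two arguments.

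\begin{itemize}
\item \emph{Existence of limits.} For a point $(E_*,\varphi,\Phi)$, the map $t\mapsto h_{\mathrm{SO}(2m+1),\mathrm{st}}(E_*,\varphi,t\Phi)=(t^2s_2,\dots,t^{2m}s_{2m})$ extends over $t=0$ with value $0$. The valuative criterion of properness then extends $t\mapsto(E_*,\varphi,t\Phi)$ to $\mathbb{C}\to\mathcal{M}^{\mathrm{SO}(2m+1),\mathrm{st}}_{\mathrm{Higgs}}(\alpha)$. The limit lies in the strongly parabolic locus because that locus is closed.
\item \emph{Properness of the fixed locus.} Since $(\mathcal{A}^{\mathrm{st}}_{\mathrm{SO}(2m+1)})^{\mathbb{C}^*}=\{0\}$, equivariance puts the fixed locus inside the fibre $h_{\mathrm{SO}(2m+1),\mathrm{st}}^{-1}(0)$. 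This fibre is proper, and the fixed locus is closed in it, so the fixed locus is proper.
\end{itemize}

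These two conditions verify Definition \ref{semiprojective}.

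The main point needing care is the properness step. It rests on two facts: the strongly parabolic locus is closed, and the restricted Hitchin map lands in $\mathcal{A}^{\mathrm{st}}_{\mathrm{SO}(2m+1)}$, which uses nilpotency of the residues. I would justify closedness using the linearity of the condition on residues over a family, as in the preliminaries. The remaining steps are formal repetitions of the earlier lemmas.
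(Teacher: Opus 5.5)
Your proposal is correct and follows the paper's route. The paper only says this case follows ``similarly'', i.e.\ from $\mathbb{C}^*$-equivariance and properness of the Hitchin map via the arguments of Lemmas \ref{limit1} and \ref{fixed1}, exactly as you run them; you also make explicit what the paper leaves implicit (closedness and $\mathbb{C}^*$-stability of the strongly parabolic locus, and properness of $h_{\mathrm{SO}(2m+1),\mathrm{st}}$ into the closed subspace $\mathcal{A}^{\mathrm{st}}_{\mathrm{SO}(2m+1)}$), and your ``symmetric for $\varphi$'' should really read skew-adjoint (a slip inherited from the paper), which is harmless since you only use that the condition is linear.
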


\section{Very stable symplectic parabolic bundles}

In this section, we establish a criterion for the (strong) very stability of a symplectic parabolic bundle in terms of the Hitchin morphism.

Let $(E_*,\varphi)$ be a symplectic parabolic bundle, where
\[
\varphi : E_* \otimes E_* \longrightarrow L_*
\]
is a nondegenerate alternating parabolic form with values in a parabolic line bundle $L_*$. The symmetric power $\mathrm{Sym}^2(E_*)$ is understood with the induced parabolic structure coming from the tensor product $E_* \otimes E_*$; see, for example, \cite{MY92,Y93}. Define
\[
\mathrm{PEnd}_{\mathrm{Sp}}(E_*) \coloneqq \mathrm{Sym}^2(E_*) \otimes L_*^\vee \subset \mathrm{PEnd}(E_*)=E_* \otimes E_* \otimes L_*^\vee,
\]
and similarly let
\[
\mathrm{SPEnd}_{\mathrm{Sp}}(E_*) \subset \mathrm{SPEnd}(E_*)
\]
denote the sheaf of symmetric symplectic strongly parabolic endomorphisms of $E_*$.

Set
\[
\mathcal{W}_E \coloneqq H^0\bigl(X,\mathrm{PEnd}_{\mathrm{Sp}}(E_*) \otimes K(D)\bigr),
\]
and
\[
\mathcal{W}_{E,\mathrm{st}} \coloneqq H^0\bigl(X,\mathrm{SPEnd}_{\mathrm{Sp}}(E_*) \otimes K(D)\bigr)\subset \mathcal{W}_E.
\]

If $(E_*,\varphi)$ is stable, then every $\Phi \in \mathcal{W}_E$ defines a stable symplectic parabolic Higgs bundle $(E_*,\varphi,\Phi)$. Hence we obtain a natural morphism
\[
\iota_E : \mathcal{W}_E \longrightarrow \mathcal{M}^{\Sp}_{\Higgs}(\alpha),
\qquad
\Phi \longmapsto (E_*,\varphi,\Phi).
\]
Similarly, every $\Phi \in \mathcal{W}_{E,\mathrm{st}}$ defines a stable strongly parabolic symplectic Higgs bundle, and therefore we obtain a natural morphism
\[
\iota_{E,\mathrm{st}} : \mathcal{W}_{E,\mathrm{st}} \longrightarrow \mathcal{M}^{\Sp,\mathrm{st}}_{\Higgs}(\alpha).
\]

We now define
\begin{equation}\label{eq:hE}
h_E \coloneqq h_{\Sp}\circ \iota_E : \mathcal{W}_E \longrightarrow \mathcal{A}_{\Sp},
\end{equation}
and
\begin{equation}\label{eq:hEst}
h_{E,\mathrm{st}} \coloneqq h_{\Sp,\mathrm{st}}\circ \iota_{E,\mathrm{st}} : \mathcal{W}_{E,\mathrm{st}} \longrightarrow \mathcal{A}^{\mathrm{st}}_{\Sp}.
\end{equation}

\begin{definition}
A symplectic parabolic bundle $(E_*,\varphi)$ is said to be \textit{very stable} if it admits no nonzero nilpotent symplectic parabolic Higgs field in $\mathcal{W}_E$. It is said to be \textit{strongly very stable} if it admits no nonzero nilpotent strongly parabolic symplectic Higgs field in $\mathcal{W}_{E,\mathrm{st}}$.
\end{definition}

\begin{remark}
Every very stable symplectic parabolic bundle is strongly very stable, since
\[
\mathcal{W}_{E,\mathrm{st}} \subset \mathcal{W}_E.
\]
\end{remark}

\begin{lemma}\label{lemma:svs-stable}
If a symplectic parabolic bundle $(E_*,\varphi)$ is strongly very stable, then it is stable. Consequently, every very stable symplectic parabolic bundle is stable.
\end{lemma}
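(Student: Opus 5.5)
We argue by contraposition: assuming $(E_*,\varphi)$ is not stable, we construct a nonzero nilpotent strongly parabolic symplectic Higgs field in $\mathcal{W}_{E,\mathrm{st}}$. The second assertion then follows from the remark that $\mathcal{W}_{E,\mathrm{st}}\subset\mathcal{W}_E$, so very stable implies strongly very stable.

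\textbf{Step 1: a destabilizing isotropic subbundle.} If $(E_*,\varphi)$ is not stable, there is a nonzero isotropic subbundle $F\subset E$ with $\mu_{\mathrm{par}}(F_*)\ge\mu_{\mathrm{par}}(E_*)$. Let $F^\perp$ be the $\varphi$-orthogonal, so $F\subset F^\perp$. The form $\tilde\varphi$ identifies $E_*/F^\perp_*$ with $L_*\otimes F_*^\vee$. This gives the three-step filtration $0\subset F\subset F^\perp\subset E$.

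\textbf{Step 2: a nonzero strongly parabolic map.} We look for a nonzero homomorphism
\[
\psi: E_*/F^\perp_*\cong L_*\otimes F_*^\vee\longrightarrow F_*\otimes K(D)
\]
that is strongly parabolic and symmetric. Symmetry means $\psi$ is a section of $\mathrm{Sym}^2(F_*)\otimes L_*^\vee\otimes K(D)$ in its strongly parabolic version. We then set
\[
\Phi: E\to E/F^\perp\xrightarrow{\psi} F\otimes K(D)\hookrightarrow E\otimes K(D).
\]
This $\Phi$ satisfies $\Phi^2=0$, so it is nilpotent. Symmetry of $\psi$ translates into $\langle\Phi v,w\rangle=-\langle v,\Phi w\rangle$, so $\Phi\in\mathcal{W}_{E,\mathrm{st}}$ is nonzero. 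This contradicts strong very stability.

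\textbf{Step 3 (main obstacle): existence of $\psi$.} We need
\[
H^0\bigl(X,\mathrm{SP}\mathrm{Sym}^2(F_*)\otimes L_*^\vee\otimes K(D)\bigr)\neq0.
\]
Use parabolic Serre duality \eqref{parSerre} applied to the corresponding subsheaf. This space is dual to $H^1$ of the parabolic dual $\mathrm{Sym}^2(F^\vee_*)\otimes L_*$. By Riemann--Roch,
\[
h^0-h^1\ge \deg-\mathrm{rk}\,(g-1),
\]
computed with parabolic degrees. The condition $\mu_{\mathrm{par}}(F_*)\ge\mu_{\mathrm{par}}(E_*)=\tfrac12\mathrm{pardeg}(L_*)$ gives $\mathrm{pardeg}(\mathrm{Sym}^2 F^\vee_*\otimes L_*)\le 0$. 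Since $g\ge2$, the $H^1$ is then nonzero, so the dual $H^0$ we want is nonzero.

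The delicate point is the bookkeeping of the parabolic correction terms. Passing from parabolic degree to the ordinary degree of the underlying sheaf of strongly parabolic morphisms can lose up to roughly $\mathrm{rk}\cdot|D|$. So the inequality must be checked with care, possibly using full flags and the extra twist by $\mathcal{O}(D)$.

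If this estimate is tight, I would fall back on a simpler choice: take a line subbundle $N\subset F$, which is automatically isotropic. Then produce a symmetric map $L\otimes N^{\vee}\to N\otimes K(D)$, that is, a section of $N^2L^{-1}K(D)$ vanishing at the points of $D$ as needed for strong parabolicity. The degree count for this line bundle is easier to control.
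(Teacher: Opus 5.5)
Your argument is correct and follows the same route as the proof the paper cites (Laumon, and Pe\'on-Nieto in the parabolic case): take a destabilizing subbundle, apply Riemann--Roch and parabolic Serre duality, and obtain a nonzero square-zero strongly parabolic Higgs field. You adapt it properly to the symplectic setting by taking $F$ isotropic and using only the $\mathrm{Sym}^2(F_*)\otimes L_*^\vee\otimes K(D)$ block, which is what makes $\Phi$ compatible with $\varphi$; the paper itself does not spell this adaptation out.

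The bookkeeping you flag in Step 3 is harmless. Riemann--Roch is applied on the $H^1$ side, to the sheaf of parabolic (not strongly parabolic) symmetric forms $F_*\otimes F_*\to L_*$. Since weights are nonnegative, its ordinary degree is at most its parabolic degree,
\[
\tfrac{k(k+1)}{2}\bigl(\mathrm{pardeg}(L_*)-2\mu_{\mathrm{par}}(F_*)\bigr)\le 0,
\]
where $k=\mathrm{rk}\,F$.

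Note also that Riemann--Roch is an equality, not the inequality you wrote. The bound you actually need is
\[
h^1\ge \tfrac{k(k+1)}{2}(g-1)-\deg\ \ge\ \tfrac{k(k+1)}{2}(g-1)>0.
\]

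So the line-subbundle fallback is unnecessary. It would not work in general anyway, because a line subbundle of $F$ need not be destabilizing.
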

\begin{proof}
The proof is the same as in \cite[Lemma $3.5$]{P24} (see also \cite[Proposition $3.5$]{L88}).
\end{proof}

We will use the following lemma for our purpose.
\begin{lemma}{\cite[Lemma $1.3$]{Z20}}\label{finite}
Let $f=(f_1,\dots,f_k):\mathbb{A}^m \longrightarrow \mathbb{A}^k$ be a morphism defined by homogeneous polynomials. If $f^{-1}(0)=\{0\}$, then $f$ is finite.
\end{lemma}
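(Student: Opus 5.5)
The statement to prove is Lemma \ref{finite}, quoted from \cite[Lemma $1.3$]{Z20}. I would reduce finiteness to the graded Nakayama lemma.

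Write $A=\mathbb{C}[y_1,\dots,y_m]$ for the coordinate ring of the source and $R=\mathbb{C}[z_1,\dots,z_k]$ for that of the target. The morphism $f$ is given by $f^*:R\to A$, $z_j\mapsto f_j$, so $f$ is finite exactly when $A$ is a finitely generated module over $B:=\mathbb{C}[f_1,\dots,f_k]\subset A$. If some $f_j$ has degree $0$, it is a nonzero constant or zero. A nonzero constant makes $f^{-1}(0)$ empty, contradicting $f^{-1}(0)=\{0\}$. A zero component can be discarded. So we may assume every $f_j$ is homogeneous of positive degree.

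\emph{Step 1 (Nullstellensatz).} Let $I=(f_1,\dots,f_k)\subset A$. Its zero locus is $f^{-1}(0)=\{0\}$, so $\sqrt{I}=(y_1,\dots,y_m)$. Hence there is an $N$ with $y_i^N\in I$ for all $i$. Therefore $A/I$ is a finite-dimensional graded $\mathbb{C}$-vector space, spanned by the images of finitely many homogeneous monomials $g_1,\dots,g_s$ (those of degree less than $mN$, say).

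\emph{Step 2 (graded Nakayama).} I will show that $M:=\sum_l B g_l$ equals $A$, by induction on degree. Let $a\in A$ be homogeneous of degree $d$. By Step 1, $a=\sum_l c_l g_l+\sum_j f_j a_j$, with $c_l\in\mathbb{C}$ and each $a_j$ homogeneous of degree $d-\deg f_j<d$. By induction each $a_j\in M$. Since $M$ is a $B$-module, $f_ja_j\in M$, and so $a\in M$. The base case $d=0$ holds because constants are spanned by the $g_l$. Thus $A$ is finitely generated over $B$, and $B$ is a quotient of $R$. Hence $A$ is a finite $R$-module, which means $f$ is finite.

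The only delicate point is the induction in Step 2. It uses positivity of the degrees $\deg f_j$, which is exactly why homogeneity is needed: the homogeneity ensures the degree strictly drops. Everything else is the standard Nullstellensatz.
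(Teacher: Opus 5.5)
The paper does not prove this lemma; it quotes it from Zelaci (Lemma 1.3 of Z20) and uses it as a black box in Theorem \ref{quasi-finite} and Proposition \ref{stronglyverystable}, so there is no in-text argument to compare yours with.

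Your proof is correct and complete. The Nullstellensatz gives $\sqrt{I}=(y_1,\dots,y_m)$, hence $\dim_{\mathbb{C}}A/I<\infty$. The graded Nakayama induction then shows that finitely many monomials generate $A$ as a module over $B=\mathbb{C}[f_1,\dots,f_k]$. Positivity of the degrees $\deg f_j$, which you secure by disposing of constant components, is exactly what makes the induction terminate.

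The one step worth writing out is the decomposition in Step 2. From $a-\sum_l c_l g_l\in I$, pass to its degree-$d$ component. That component still lies in $I$ because $I$ is generated by homogeneous elements. This lets you keep only the $g_l$ of degree $d$ and take each $a_j$ homogeneous of degree $d-\deg f_j$, with $a_j=0$ when this degree is negative.

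For comparison, there is a more geometric route, closer in spirit to the $\mathbb{C}^*$-arguments of Section \ref{semiproj}. The map $[x:t]\mapsto[f_1(x):\dots:f_k(x):t]$ is a well-defined morphism from $\mathbb{P}^m$ to the weighted projective space $\mathbb{P}(\deg f_1,\dots,\deg f_k,1)$, precisely because $f^{-1}(0)=\{0\}$. Over the chart $\{t\neq 0\}\cong\mathbb{A}^k$ it restricts to $f$, which shows $f$ is proper, and proper plus affine gives finite. Your algebraic argument avoids weighted projective spaces and produces explicit module generators, so it is the more elementary of the two.
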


\begin{theorem}\label{quasi-finite}
Let $(E_*,\varphi)$ be a stable symplectic parabolic bundle. Then
\[
h_E \textnormal{ is quasi-finite } \implies (E_*,\varphi) \textnormal{ is very stable } \implies h_E \textnormal{ is finite}.
\]
\end{theorem}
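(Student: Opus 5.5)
The plan is to treat $h_E$ as a morphism between affine spaces given by homogeneous polynomials, and to identify its zero fibre with the set of nilpotent Higgs fields in $\mathcal{W}_E$. Choose a basis of the finite-dimensional vector space $\mathcal{W}_E$ and bases of the spaces $H^0(X,K(D)^{2i})$. In these coordinates the $i$-th component of $h_E$ is $\Phi\mapsto s_{2i}(\Phi)=\pm\mathrm{tr}(\wedge^{2i}\Phi)$. This component is a homogeneous polynomial of degree $2i$ in the coordinates of $\Phi$: it is linear in the matrix coefficients of $\wedge^{2i}\Phi$, and these are degree $2i$ minors of $\Phi$. Homogeneity is also visible from the scaling computation in Lemma \ref{equivariant}, namely $h_E(t\Phi)=t\cdot h_E(\Phi)$ with weights $2,4,\dots,2m$. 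Next, $h_E(\Phi)=0$ exactly when $\det(x\cdot I-\Phi)=x^{2m}$ fibrewise, that is, when $\Phi$ is nilpotent. Here we use that the odd coefficients vanish automatically for $\Phi$ compatible with $\varphi$. Hence
\[
h_E^{-1}(0)=\{\Phi\in\mathcal{W}_E : \Phi \text{ nilpotent}\}.
\]

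\emph{First implication (quasi-finite $\Rightarrow$ very stable).} Suppose $(E_*,\varphi)$ is not very stable. Then there is a nonzero nilpotent $\Phi_0\in\mathcal{W}_E$. By homogeneity, $h_E(t\Phi_0)=t\cdot h_E(\Phi_0)=t\cdot 0=0$ for every $t\in\mathbb{C}$. So the whole line $\mathbb{C}\Phi_0$, which is positive dimensional, lies in the fibre $h_E^{-1}(0)$, and $h_E$ is not quasi-finite. Taking the contrapositive gives the first implication.

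\emph{Second implication (very stable $\Rightarrow$ finite).} If $(E_*,\varphi)$ is very stable, the description of the zero fibre above gives $h_E^{-1}(0)=\{0\}$. Since $h_E:\mathbb{A}^{\dim\mathcal{W}_E}\to\mathbb{A}^{\dim\mathcal{A}_{\Sp}}$ is given by homogeneous polynomials, Lemma \ref{finite} applies directly and shows that $h_E$ is finite.

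\emph{Main obstacle.} Most of the care goes into checking that $h_E$ really is a polynomial map with homogeneous components in fixed linear coordinates. This requires $\iota_E$ followed by $h_{\Sp}$ to agree with the pointwise characteristic-polynomial map on $\mathcal{W}_E$, which holds because $h_{\Sp}$ depends only on $\Phi$ and not on the point in moduli. It also requires that Lemma \ref{finite} tolerate components of different degrees; I would check this in \cite{Z20}, or else replace each $f_i$ by the power $f_i^{(2m)!/(2i)}$ so that all components have equal degree. Finiteness of the modified map implies finiteness of $h_E$, since the modified map factors through $h_E$ followed by a finite map. If necessary, I would also reprove the lemma via the standard weighted-projective argument: $\mathbb{C}[\mathcal{W}_E]$ is integral over $\mathbb{C}[f_1,\dots,f_k]$ by the graded Nakayama lemma, because $\mathbb{C}[\mathcal{W}_E]/(f_1,\dots,f_k)$ is finite dimensional when the zero fibre is $\{0\}$.
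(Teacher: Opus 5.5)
Your proposal is correct and follows essentially the same route as the paper: homogeneity of $h_E$ puts the whole line through a nonzero nilpotent field in $h_E^{-1}(0)$, which contradicts quasi-finiteness, and when $h_E^{-1}(0)=\{0\}$ Lemma \ref{finite} gives finiteness. Your extra checks fill in details the paper leaves implicit: homogeneity in linear coordinates, the identification of $h_E^{-1}(0)$ with the nilpotent fields via evenness of the characteristic polynomial, and the handling of components of unequal degree.
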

\begin{proof}
Assume first that $h_E$ is quasi-finite. Suppose that $(E_*,\varphi)$ is not very stable. Then there exists a nonzero nilpotent Higgs field
\[
0 \neq \Phi \in \mathcal{W}_E
\]
such that $h_E(\Phi)=0$. Since the Hitchin morphism is given by homogeneous invariant polynomials, for every $t \in \mathbb{C}^*$ we have
\[
h_E(t\Phi)=t\cdot h_E(\Phi)=0.
\]
Hence
\[
\mathbb{C}^* \cdot \Phi \subset h_E^{-1}(0).
\]
Since $\mathcal{W}_E$ is a vector space and $\Phi \neq 0$, the set $\mathbb{C}^*\cdot \Phi$ is infinite, contradicting the quasi-finiteness of $h_E$. Therefore $(E_*,\varphi)$ is very stable.

Conversely, assume that $(E_*,\varphi)$ is very stable. Then the only nilpotent element in $\mathcal{W}_E$ is $0$, so
\[
h_E^{-1}(0)=\{0\}.
\]
Since $h_E$ is defined by homogeneous polynomials on the affine space $\mathcal{W}_E$, Lemma \ref{finite} implies that $h_E$ is finite.
\end{proof}

We now focus on the strongly parabolic locus.

\begin{prop}\label{proper2}
Let $(E_*,\varphi)$ be a stable symplectic parabolic bundle. Then
\[
h_{E,\mathrm{st}} \textnormal{ is finite } \iff h_{E,\mathrm{st}} \textnormal{ is proper}.
\]
In particular, if $h_{E,\mathrm{st}}$ is finite, then it is quasi-finite.
\end{prop}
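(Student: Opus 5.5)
The forward implication is formal: a finite morphism is proper (finite morphisms are affine and universally closed), and a finite morphism has finite fibres, hence is quasi-finite. This also gives the ``in particular'' clause. The content of the proposition is therefore the converse, proper $\Rightarrow$ finite. I would prove it by reducing to Lemma \ref{finite}, using the $\mathbb{C}^*$-homogeneity of $h_{E,\mathrm{st}}$ on the vector space $\mathcal{W}_{E,\mathrm{st}}$.

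\emph{Step 1: homogeneity.} The map $h_{E,\mathrm{st}}$ is a polynomial map $\mathcal{W}_{E,\mathrm{st}}\to\mathcal{A}^{\mathrm{st}}_{\Sp}$. After choosing bases, its components are the coefficients of $\mathrm{tr}(\wedge^{2i}\Phi)$, and each is homogeneous of degree $2i$ in $\Phi$. As in Lemma \ref{equivariant}, this gives $h_{E,\mathrm{st}}(t\Phi)=t\cdot h_{E,\mathrm{st}}(\Phi)$, where $t$ acts on the base with weights $2,4,\dots,2m$.

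\emph{Step 2: properness forces $h_{E,\mathrm{st}}^{-1}(0)=\{0\}$.} Assume $h_{E,\mathrm{st}}$ is proper. Then the fibre $Z:=h_{E,\mathrm{st}}^{-1}(0)$ is a proper scheme over $\mathbb{C}$. It is also a closed subscheme of the affine space $\mathcal{W}_{E,\mathrm{st}}$, so it is affine. A proper affine variety is finite, so $Z$ is a finite set of points. By Step 1, $Z$ is stable under scaling $\Phi\mapsto t\Phi$. If $0\neq\Phi\in Z$, the whole line $\mathbb{C}^*\Phi$ would lie in $Z$, which is impossible since $Z$ is finite. Since $0\in Z$, we get $Z=\{0\}$ set-theoretically.

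\emph{Step 3: conclude.} Lemma \ref{finite} only needs the set-theoretic condition $f^{-1}(0)=\{0\}$; its proof is the graded Nakayama argument, where the radical of the ideal generated by the components contains the irrelevant ideal. So Lemma \ref{finite} applies to $h_{E,\mathrm{st}}$ and shows it is finite.

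The main obstacle is Step 2, specifically the assertion that a proper closed subscheme of affine space is finite. I would justify it by noting that $\mathcal{O}(Z)$ is a finitely generated $\mathbb{C}$-algebra. On a proper connected reduced scheme the global functions are constants, so each connected component of $Z_{\mathrm{red}}$ is a point. Alternatively, one can use that a morphism which is both proper and affine is finite, applied to $Z\to\operatorname{Spec}\mathbb{C}$. Either route is standard, and once it is in place the remaining steps are routine.
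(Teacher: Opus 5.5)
Your argument is correct, but it takes a longer route than the paper. The paper proves proper $\Rightarrow$ finite in one line. $h_{E,\mathrm{st}}$ is a morphism between the affine spaces $\mathcal{W}_{E,\mathrm{st}}$ and $\mathcal{A}^{\mathrm{st}}_{\Sp}$, so it is an affine morphism, and a proper affine morphism is finite.

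You invoke exactly this fact, but only for the fibre $Z\to\operatorname{Spec}\mathbb{C}$. Since every morphism of affine schemes is affine, the same fact applies verbatim to $h_{E,\mathrm{st}}$ itself. That makes your Steps 1 and 3 (homogeneity and Lemma \ref{finite}) unnecessary for this proposition.

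Your detour does buy two things. First, your argument only needs properness of the single fibre $h_{E,\mathrm{st}}^{-1}(0)$, not of the whole map; the $\mathbb{C}^*$-homogeneity then upgrades finiteness of that fibre to finiteness of $h_{E,\mathrm{st}}$. Second, you show along the way that properness forces $h_{E,\mathrm{st}}^{-1}(0)=\{0\}$, which is exactly strong very stability. So you are effectively proving the chain proper $\Rightarrow$ strongly very stable $\Rightarrow$ finite, which merges Propositions \ref{proper2} and \ref{stronglyverystable} as they are used in Theorem \ref{maintheorem}.

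The paper's argument, by contrast, is purely formal. It holds for any proper morphism between affine varieties and makes no use of the $\mathbb{C}^*$-structure.
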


\begin{proof}
Suppose $h_{E,\mathrm{st}}$ is finite. Then it is proper and quasi-finite.

Now assume that $h_{E,\mathrm{st}}$ is proper. Since both $\mathcal{W}_{E,\mathrm{st}}$ and $\mathcal{A}^{\mathrm{st}}_{\Sp}$ are affine varieties, the morphism $h_{E,\mathrm{st}}$ is affine. A proper affine morphism is finite. Hence $h_{E,\mathrm{st}}$ is finite.
\end{proof}

\begin{prop}\label{stronglyverystable}
Let $(E_*,\varphi)$ be a stable symplectic parabolic bundle. Then
\[
h_{E,\mathrm{st}} \textnormal{ is quasi-finite } \implies (E_*,\varphi) \textnormal{ is strongly very stable } \implies h_{E,\mathrm{st}} \textnormal{ is finite}.
\]
\end{prop}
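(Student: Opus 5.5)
The plan is to repeat the argument of Theorem \ref{quasi-finite} verbatim, with $\mathcal{W}_E$, $h_E$ and $h_{\Sp}$ replaced by $\mathcal{W}_{E,\mathrm{st}}$, $h_{E,\mathrm{st}}$ and $h_{\Sp,\mathrm{st}}$. Two structural facts make this possible. First, $\mathcal{W}_{E,\mathrm{st}}$ is a finite-dimensional complex vector space, being a linear subspace of $\mathcal{W}_E$ cut out by the linear strongly parabolic conditions. Second, $h_{E,\mathrm{st}}$ is the restriction of $h_E$ to this subspace, so it is again given in linear coordinates by the homogeneous invariant polynomials $s_{2i}$ (homogeneous of degree $2i$ in $\Phi$). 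Its target is the affine space $\mathcal{A}^{\mathrm{st}}_{\Sp}$; the fact that the coefficients land in $H^0(X,K^{2i}(D^{2i-1}))$ is the nilpotent-residue observation already made for $h_{\mathrm{st}}$.

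For the first implication, I will argue by contraposition. Suppose $(E_*,\varphi)$ is not strongly very stable. Then there is a nonzero nilpotent $\Phi\in\mathcal{W}_{E,\mathrm{st}}$, so all $s_{2i}(\Phi)$ vanish and $h_{E,\mathrm{st}}(\Phi)=0$. By the equivariance of Lemma \ref{equivariant}, restricted to the strongly parabolic locus, we get $h_{E,\mathrm{st}}(t\Phi)=t\cdot h_{E,\mathrm{st}}(\Phi)=0$ for all $t\in\mathbb{C}^*$. Hence the line $\mathbb{C}\Phi$ lies in $h_{E,\mathrm{st}}^{-1}(0)$. This fibre is therefore infinite, and even positive-dimensional, which contradicts quasi-finiteness.

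For the second implication, assume strong very stability. Nilpotency of $\Phi$ is equivalent to the vanishing of its characteristic polynomial coefficients. In the symplectic case the odd coefficients vanish automatically, so this is equivalent to $h_{E,\mathrm{st}}(\Phi)=0$, and hence $h_{E,\mathrm{st}}^{-1}(0)=\{0\}$ set-theoretically. Choosing linear coordinates on $\mathcal{W}_{E,\mathrm{st}}$ and on $\mathcal{A}^{\mathrm{st}}_{\Sp}$ expresses $h_{E,\mathrm{st}}$ as a tuple of homogeneous polynomials. Lemma \ref{finite} then gives that $h_{E,\mathrm{st}}$ is finite.

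The only point needing care is the claim that $h_{E,\mathrm{st}}$ is genuinely a morphism of affine spaces given by homogeneous polynomials, even though the components have different degrees $2,4,\dots,2m$. I expect Lemma \ref{finite} to be stated for homogeneous components of possibly different degrees, as in \cite{Z20}. If a uniform degree is needed, I will replace $f$ by $(f_i^{N/\deg f_i})_i$ with $N$ the least common multiple of the degrees. This map has the same zero fibre, and finiteness of it implies finiteness of $f$, since the coordinate ring is integral over the larger subring. Stability of $(E_*,\varphi)$ is used only to ensure that $\iota_{E,\mathrm{st}}$, and hence $h_{E,\mathrm{st}}$, is well defined.
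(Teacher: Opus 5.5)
Your proposal is correct and follows the paper's own proof, which repeats the argument of Theorem \ref{quasi-finite} with $\mathcal{W}_{E,\mathrm{st}}$ and $h_{E,\mathrm{st}}$ in place of $\mathcal{W}_E$ and $h_E$. A nonzero nilpotent field $\Phi$ puts the infinite set $\mathbb{C}^*\cdot\Phi$ inside $h_{E,\mathrm{st}}^{-1}(0)$; conversely, strong very stability gives $h_{E,\mathrm{st}}^{-1}(0)=\{0\}$, and Lemma \ref{finite} then yields finiteness. Your extra remarks, on the evenness of the symplectic characteristic polynomial and on reducing to a uniform degree, are valid refinements, though the second is not needed: the standard graded-Nakayama proof of Lemma \ref{finite} already handles homogeneous components of different degrees.
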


\begin{proof}
The proof is the same as in Theorem \ref{quasi-finite}, replacing $\mathcal{W}_E$ by $\mathcal{W}_{E,\mathrm{st}}$ and $h_E$ by $h_{E,\mathrm{st}}$.
\end{proof}

\begin{theorem}\label{maintheorem}
Let $(E_*,\varphi)$ be a stable symplectic parabolic bundle. Then
\begin{align*}
(E_*,\varphi) \textnormal{ is strongly very stable }
&\iff h_{E,\mathrm{st}} \textnormal{ is finite} \\
&\iff h_{E,\mathrm{st}} \textnormal{ is proper} \\
&\iff h_{E,\mathrm{st}} \textnormal{ is quasi-finite}.
\end{align*}
\end{theorem}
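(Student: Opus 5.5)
The plan is to assemble the equivalences from Proposition \ref{proper2} and Proposition \ref{stronglyverystable}, closing the logical cycle. Proposition \ref{stronglyverystable} gives
\[
h_{E,\mathrm{st}} \text{ quasi-finite} \implies (E_*,\varphi) \text{ strongly very stable} \implies h_{E,\mathrm{st}} \text{ finite}.
\]
Proposition \ref{proper2} gives that $h_{E,\mathrm{st}}$ is finite if and only if it is proper, and that finite implies quasi-finite. Chaining these yields the cycle
\[
\text{quasi-finite} \Rightarrow \text{strongly very stable} \Rightarrow \text{finite} \Rightarrow \text{quasi-finite}.
\]
So the three conditions ``strongly very stable'', ``finite'' and ``quasi-finite'' are equivalent, and ``proper'' joins them through the equivalence finite $\iff$ proper.

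For completeness I would re-verify the two ingredients behind Proposition \ref{stronglyverystable} in the strongly parabolic setting.

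\textbf{Homogeneity.} The map $h_{E,\mathrm{st}}$ is given by homogeneous polynomials on the vector space $\mathcal{W}_{E,\mathrm{st}}$, with component $s_{2i}$ of degree $2i$. This holds because $s_{2i}(\Phi)$ is a coefficient of $\det(x\cdot I-\Phi)$, and Lemma \ref{equivariant} shows it scales as $t^{2i}$. Hence, after choosing bases of $\mathcal{W}_{E,\mathrm{st}}$ and $\mathcal{A}^{\mathrm{st}}_{\Sp}$, Lemma \ref{finite} applies.

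\textbf{Nilpotent cone.} The fibre $h_{E,\mathrm{st}}^{-1}(0)$ is exactly the set of nilpotent elements of $\mathcal{W}_{E,\mathrm{st}}$. Vanishing of all coefficients of the characteristic polynomial means the characteristic polynomial is $x^{2m}$, and by Cayley--Hamilton this forces $\Phi^{2m}=0$.

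The quasi-finite $\Rightarrow$ strongly very stable direction then uses the $\mathbb{C}^*$-orbit of a nonzero nilpotent $\Phi$. This orbit lies in $h_{E,\mathrm{st}}^{-1}(0)$ and is infinite, which contradicts quasi-finiteness.

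For proper $\Rightarrow$ finite, I use that a proper morphism between affine varieties is affine and proper, hence finite. There is also a more elementary route: properness of $h_{E,\mathrm{st}}$ makes $h_{E,\mathrm{st}}^{-1}(0)$ a closed cone in an affine space that is also proper. A complete affine variety is finite, and a cone that is finite must be $\{0\}$, so $(E_*,\varphi)$ is strongly very stable.

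The only step needing real care is Lemma \ref{finite}, the homogeneous finiteness criterion. It is taken from \cite{Z20}, so there is no genuine obstacle; the main point to check is that the grading on $\mathcal{A}^{\mathrm{st}}_{\Sp}$ is by positive weights, which the lemma allows.
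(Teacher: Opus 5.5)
Your proposal is correct and matches the paper's proof: it chains Proposition \ref{stronglyverystable} and Proposition \ref{proper2} into the cycle quasi-finite $\Rightarrow$ strongly very stable $\Rightarrow$ finite $\Rightarrow$ quasi-finite, and attaches properness through finite $\iff$ proper. Your extra remark is also valid: properness makes $h_{E,\mathrm{st}}^{-1}(0)$ a complete, hence finite, affine cone, so it equals $\{0\}$, which gives a direct route from properness to strong very stability without the affine-morphism argument.
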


\begin{proof}
If $(E_*,\varphi)$ is strongly very stable, then by Proposition \ref{stronglyverystable}, the morphism $h_{E,\mathrm{st}}$ is finite. By Proposition \ref{proper2}, this is equivalent to properness, and in particular implies quasi-finiteness.

Conversely, if $h_{E,\mathrm{st}}$ is quasi-finite, then by Proposition \ref{stronglyverystable}, $(E_*,\varphi)$ is strongly very stable. This completes the proof.
\end{proof}

\begin{example}
Consider the case $G=\Sp(2,\mathbb{C}) \cong \mathrm{SL}(2,\mathbb{C})$. In this case, a symplectic parabolic bundle reduces to a rank $2$ parabolic bundle with trivial determinant equipped with a nondegenerate alternating form.

The space $\mathcal{W}_{E,\mathrm{st}}$ consists of strongly parabolic traceless endomorphisms twisted by $K(D)$, and the Hitchin map is given by the determinant, which lies in $H^0(X,K(D)^2)$. In this setting, strongly very stability means that there are no nonzero nilpotent Higgs fields, i.e., no Higgs fields with vanishing determinant.

Thus, Theorem \ref{maintheorem} reduces in this case to the statement that the Hitchin map on $\mathcal{W}_{E,\mathrm{st}}$ is proper if and only if there are no nonzero nilpotent Higgs fields.
\end{example}

\begin{remark}\label{orthogonal}
Using the same method, one obtains an analogous characterization of strong very stability in terms of the corresponding Hitchin map in the orthogonal case, for stable locus of the moduli space $\mathcal{M}^{G,\mathrm{st}}_{\mathrm{Higgs}}(\alpha)$, where $G=\mathrm{SO}(2m,\mathbb{C})$ or $\mathrm{SO}(2m+1,\mathbb{C})$. We omit the details, as the arguments are entirely parallel to the symplectic case.
\end{remark}

\section*{Acknowledgment}
 The author is supported by the INSPIRE faculty fellowship (Ref No.: IFA22-MA 186) funded by the DST, Govt. of India.

\end{document}